\newtheorem{Thm}{Theorem}[section]
\newtheorem{Prop}[Thm]{Proposition}
\newtheorem{Lem}[Thm]{Lemma}
\theoremstyle{definition}
\newtheorem{Rem}[Thm]{Remark}
\newtheorem{Def}[Thm]{Definition}
\newtheorem{Exm}[Thm]{Example}
\newcommand{\Cs}{\mbox{${\rm C}^\ast$}}
\newcommand{\id}{\mbox{\rm id}}
\newcommand{\coker}{\mbox{\rm coker}}
\title[Minimal skew products]{Construction of minimal skew products of amenable minimal dynamical systems}
\author{Yuhei Suzuki}
\subjclass[2000]{ Primary 37B05 ; Secondary 54H20}
\keywords{\Cs -algebras; amenable actions; pure infiniteness}
\address{Department of Mathematical Sciences,
University of Tokyo, Komaba, Tokyo, 153-8914, Japan}
\email{suzukiyu@ms.u-tokyo.ac.jp}
\begin{document}

\begin{abstract}
For an amenable minimal topologically free dynamical system $\alpha$ of a group on a compact metrizable space $Z$ and for a compact metrizable space $Y$
satisfying a mild condition, we construct a minimal skew product extension of $\alpha$ on $Z\times Y$.
This generalizes a result of Glasner and Weiss.
We also study the pure infiniteness of the crossed products of minimal dynamical systems arising from this result.
In particular, we give a generalization of a result of R\o rdam and Sierakowski.
\end{abstract} 
\maketitle
\section{Introduction}\label{Sec:intro}
Recall that a topological dynamical system $\Gamma \curvearrowright X$ is said to be minimal
if every $\Gamma$-orbit is dense in $X$.
It is an interesting question to ask that for a given group $\Gamma$ which space admits a minimal (topologically) free dynamical system of $\Gamma$.
Certainly a space admitting a minimal $\Gamma$-dynamical system must have a nice homogeneity.
However, this is not sufficient even for the simplest case, that is, the case $\Gamma=\mathbb{Z}$.
For example, an obstruction from homological algebra shows that
there is no minimal homeomorphism on even dimensional spheres $S^{2n}$ (see Chapter I.6 of \cite{Br} for instance).

In \cite{GW}, Glasner and Weiss have shown the existence of minimal skew product extensions of
a minimal homeomorphism under mild conditions.
Their result in particular shows that many spaces admit a minimal homeomorphism.
For example, it follows that there exists a minimal homeomorphism on the product of the Hilbert cube and $S^1$.
This solved a question asked by T. Chapman \cite{Ch}.
For certain amenable groups, their result is generalized in \cite{Ne}.
(It also deals generalizations of other results in \cite{GW}; e.g., the existence of strictly ergodic skew products.)
In this paper, following the argument of Glasner and Weiss in \cite{GW},
we construct minimal skew products of amenable minimal topologically free dynamical systems (Theorem \ref{Thm:min}).
This provides many new examples of (amenable) minimal topologically free dynamical systems of exact groups.

We also study the reduced crossed product of these minimal skew products.
Recall that a unital \Cs -algebra $A$ is purely infinite and simple if
for any nonzero positive element $a\in A$, there is $b\in A$ with $b^{\ast} ab=1$.
Pure infiniteness plays an important role in the study of \Cs -algebras.
See \cite{Cun}, \cite{Kir}, \cite{KP}, \cite{Phi}, and \cite{Rord} for example.
A \Cs -algebra is said to be a Kirchberg algebra if it is simple, separable, nuclear, and purely infinite.
A deep theorem of Kirchberg \cite{Kir} and Phillips \cite{Phi} states that the
Kirchberg algebras are classified in terms of the KK-theory.
In particular, the Kirchberg algebras in the UCT class are classified by their K-theoretic data,
and consequently each of which is isomorphic to the one constructed in \cite{Rord}.
For these reasons, it is important to know whether a given \Cs -algebra is purely infinite.
Obviously pure infiniteness implies other infiniteness properties; e.g., tracelessness, properly infiniteness.
The latter conditions are easy to check in many situations.
However, even in the nuclear case, R\o rdam has constructed a counterexample for the converse implications \cite{Rord2}.
See \cite{Ror} and the references therein for more information on pure infiniteness and Kirchberg algebras.
In Section \ref{Sec:pi}, under certain assumptions on $Y$ and $\alpha\colon \Gamma \curvearrowright Z$, we show that the crossed products of many of dynamical systems obtained in our result are
Kirchberg algebras in the UCT class (Proposition \ref{Prop:filling}).
For this purpose, we generalize the notion of the finite filling property,
which is introduced in \cite{JR} for dynamical systems,
to \'etale groupoids.
It turns out that the generalized version is useful to construct minimal skew products
with the purely infinite crossed products.
This result is applied particularly to the case that $Y$ is a connected closed topological manifold
and that $\alpha$ is a dynamical system on the Cantor set constructed in \cite{RS}.
As a consequence, we generalize a result of R\o rdam and Sierakowski \cite{RS}, which is a result for the Cantor set, to the products of connected closed topological manifolds and the Cantor set (Theorem \ref{Thm:RS}).
This is the first generalization of their result,
and shows that for topological dynamical systems, not only the structure of groups but also the structure of spaces
is not an obstruction to form a Kirchberg algebra.

In Section \ref{Sec:free}, we study the K-theory of the crossed products of these minimal skew products in
the free group case.
Using the Pimsner--Voiculescu six-term exact sequence,
we prove a K$\ddot{{\rm u}}$nneth-type formula for them.
As an application, for any connected closed topological manifold $M$
and for any (non-amenable, countable) virtually free group $\Gamma$,
we show that there exist continuously many amenable minimal free dynamical systems of $\Gamma$
on the product of $M$ and the Cantor set whose crossed products are mutually non-isomorphic Kirchberg algebras. This generalizes a result in \cite{Suz}.

\subsection*{Spaces of dynamical systems}
For a compact metrizable space $X$, let $\mathcal{H}(X)$ denote the group of homeomorphisms on $X$.
We equip the metric $d$ on $\mathcal{H}(X)$ as follows.
First let us fix a metric $d_X$ on $X$.
Then define
$$d(\varphi, \psi):=\max_{x\in X} (d_X(\varphi(x), \psi(x)))+\max_{x\in X}(d_X(\varphi^{-1}(x), \psi^{-1}(x)))$$
for $\varphi, \psi \in \mathcal{H}(X)$.
It is not hard to check that the metric $d$ is complete and $\mathcal{H}(X)$ becomes a topological group with respect to $d$.
Note that the sequence $(\varphi_n)_n$ in $\mathcal{H}(X)$ converges to $\varphi$ in this topology
if and only if $\varphi_n$ uniformly converges to $\varphi$.
For a countable group $\Gamma$, let
$\mathcal{H}(\Gamma, X)$ denote the set of dynamical systems of $\Gamma$ on $X$,
i.e., $\mathcal{H}(\Gamma, X)={\rm Hom}(\Gamma, \mathcal{H}(X))$.
This set is naturally regarded as a closed subset of $\prod_{\Gamma}\mathcal{H}(X)$.
Since $\Gamma$ is countable, this makes $\mathcal{H}(\Gamma, X)$ to be a complete metric space.

Next let $Y$ be a compact metrizable space and let $\mathcal{G} \curvearrowright Y$ be a continuous action of a topological group $\mathcal{G}$ on $Y$.
Let $\alpha\colon \Gamma \curvearrowright Z$ be a topological dynamical system
of a group $\Gamma$ on a compact metrizable space $Z$.
Put $X=Z\times Y$.
Recall that a continuous map $c\colon \Gamma \times Z  \rightarrow \mathcal{G}$ is said to be a cocycle if
it satisfies the equation
$c(s, t.z)c(t, z)=c(st, z)$ for all $s, t\in \Gamma$ and $z\in Z$.
When there is a continuous map $h\colon Z \rightarrow \mathcal{G}$
satisfying $c(s, z)=h(s.z)^{-1}h(z)$ for all $s\in \Gamma$ and $z\in Z$,
the cocycle $c$ is said to be a coboundary.
Each cocycle $c\colon \Gamma\times Z \rightarrow \mathcal{G}$
defines an extension of $\alpha$ on $X$ by the following equation.
$$s.(z, y)=(s.z, c(s, z)y) {\rm \ for\ } s\in \Gamma {\rm\ and\ } (z, y)\in X.$$
Such an extension is called a skew product extension.
Note that when $c$ is a coboundary, the associated skew product extension is conjugate
to $\bar{\alpha}$.
Here and throughout the paper,
for a dynamical system $\alpha \colon \Gamma \curvearrowright Z$
and a compact space $Y$,
we denote by $\bar{\alpha}$ the diagonal action of $\alpha$ and the trivial action on $Y$.
Since the space $Y$ is always clear from the context, we omit $Y$ in our notation.

For a continuous map $h$ from $Z$ into $\mathcal{G}$,
we have an associated homeomorphism $H$ on $X$
defined by the formula
$H(z, y):=(z, h_z(y))$ for $(z, y)\in X$.
We denote by $\mathcal{G}_s$ the set of homeomorphisms given in the above way.
Obviously, $\mathcal{G}_s$ is a subgroup of $\mathcal{H}(X)$.
For a topological dynamical system $\alpha\colon \Gamma \curvearrowright Z$,
we define a subset $\mathcal{S}_{\mathcal{G}}(\alpha)$ of $\mathcal{H}(\Gamma, X)$ to be
$$\mathcal{S}_{\mathcal{G}}(\alpha):=\{ H^{-1}\circ \bar{\alpha}\circ H: H\in \mathcal{G}_s\}.$$
We note that the set $\mathcal{S}_{\mathcal{G}}(\alpha)$ consists of skew product extensions of $\alpha$
by coboundaries.
We denote by $\overline{\mathcal{S}}_{\mathcal{G}}(\alpha)$ the closure of $\mathcal{S}_{\mathcal{G}}(\alpha)$
in $\mathcal{H}(\Gamma, X)$.
Note that any $\beta \in \overline{\mathcal{S}}_{\mathcal{G}}(\alpha)$ is a skew product extension of $\alpha$ on $X$
whose associated cocycle takes the value in $\overline{\mathcal{G}}$.
Here $\overline{\mathcal{G}}$ denotes the closure of the image of $\mathcal{G}$ in $\mathcal{H}(X)$.
In particular, when $\alpha$ is amenable,
every dynamical system contained in $\overline{\mathcal{S}}_{\mathcal{G}}(\alpha)$ is amenable.
Throughout the paper, we always fix metrics $d_Y$ and $d_Z$ on $Y$ and $Z$ respectively and
consider the metric on $X=Z\times Y$
defined by $d_X((z_1, y_1), (z_2, y_2))= d_Y(y_1, y_2) + d_Z(z_1, z_2)$,
and use these metrics to define metrics on the homeomorphism groups.

In Section \ref{Sec:pi}, we discuss \'etale groupoids.
Throughout the paper, we always assume that \'etale groupoids are locally compact Hausdorff and
their unit spaces are compact and infinite (as a set).
For an \'etale groupoid $G$, we denote by $r$ and $s$ the range and source map
unless they are specified.
As usual, for a dynamical system $\alpha$ of a discrete group $\Gamma$ on a compact space $X$,
we usually regard the transformation groupoid $X\rtimes _\alpha \Gamma$
as the following subspace of $X \times \Gamma \times X$.
$$X\rtimes _\alpha \Gamma = \{ (\alpha_g(x), g, x)\in X \times \Gamma \times X: x\in X, g\in \Gamma\}.$$
Note that the range and source map correspond to
the projections onto the first and third coordinate respectively.
For detailed explanations and basic knowledges of \'etale groupoids, we refer the reader to Section 5.6 of \cite{BO}.

\subsection*{Notation}
\begin{itemize}
\item
For a subset $U$ of a topological space,
its closure and interior are denoted by ${\rm cl}(U)$ and ${\rm int}(U)$ respectively.
\item
For a $\ast$-homomorphism $\alpha$ between \Cs -algebras,
denote by $\alpha_{\ast, i}$
the homomorphism induced on the $K_i$-groups.
\item Denote by $\mathbb{K}$ the \Cs -algebra of all compact operators on $\ell^2(\mathbb{N})$.
\item Let $A$ be a \Cs -algebra.
For a projection $p$ in $A$ or $A\otimes \mathbb{K}$,
denote by $[p]_0$ the element of $K_0(A)$ represented by $p$.
\item
For a compact space $X$,
we denote $K_i(C(X))$ by $K^i(X)$ for short.
(Note that this coincides with the usual definition of $K^i$-group.)

\end{itemize}
\section{Construction of minimal skew product}\label{Sec:min}
The goal of this section is to prove the following theorem.
The proof is done by following the same line as that of Theorem 1 in \cite{GW}.

Before the proof, recall that a dynamical system $\alpha\colon \Gamma \curvearrowright Z$ of a group $\Gamma$ on a compact metrizable space $Z$ is said to be amenable if
there is a sequence of continuous maps
$$\mu_n\colon Z \rightarrow {\rm Prob}(\Gamma)$$
satisfying $$\lim_{n\rightarrow \infty} \sup_{z\in Z}\| s.\mu_n^z- \mu_n^{s.z}\|_1 =0 {\rm\ for\ all\ }s\in \Gamma.$$
Here ${\rm Prob}(\Gamma)$ denotes the space of probability measures on $\Gamma$ with the pointwise convergence topology,
and $\Gamma$ acts on ${\rm Prob}(\Gamma)$ by the left translation.
It is shown by Ozawa \cite{Oz} that for discrete groups,
the existence of an amenable action is equivalent to exactness.
See \cite{Ana0} and \cite{BO} for more information on amenable actions.
In the proof of the following theorem, we use amenability of dynamical systems
to construct suitable continuous functions.
In other word, amenability of dynamical systems plays the role of the F\o lner sets in the proof of Theorem 1 of \cite{GW}.
\begin{Thm}\label{Thm:min}
Let $\mathcal{G}\curvearrowright Y$ be a minimal action of a path connected group $\mathcal{G}$ on a compact metrizable space $Y$.
Let $\alpha\colon \Gamma \curvearrowright Z$ be an amenable minimal topologically free dynamical system of a countable group $\Gamma$ on a compact metrizable space $Z$.
Then the set
$$\{\beta \in \overline{\mathcal{S}}_{\mathcal{G}}(\alpha): \beta{\rm\ is\ minimal}\}$$
is a $G_\delta$-dense subset of $\overline{\mathcal{S}}_{\mathcal{G}}(\alpha)$.
\end{Thm}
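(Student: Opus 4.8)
The plan is a Baire category argument carried out directly in $\overline{\mathcal{S}}_{\mathcal{G}}(\alpha)$, which is a closed subset of the complete metric space $\mathcal{H}(\Gamma, X)$ and hence is itself completely metrizable; this follows the line of Glasner--Weiss. First I would record that minimality is a $G_\delta$ condition. Fix a countable base $\{U_n\}_n$ of nonempty open subsets of $X$, and for a finite set $F\subseteq \Gamma$ set
\[
W(n, F) := \Big\{ \beta \in \overline{\mathcal{S}}_{\mathcal{G}}(\alpha) : X = \bigcup_{g \in F} \beta_g^{-1}(U_n) \Big\}.
\]
Each $W(n,F)$ is open: if the open sets $\beta_g^{-1}(U_n)$, $g\in F$, cover the compact space $X$, one shrinks them to a cover by compact sets $K_g$ and uses that $\beta \mapsto \beta_g$ is continuous into $\mathcal{H}(X)$ (i.e.\ uniform convergence) to see that every sufficiently close $\beta'$ still satisfies $K_g \subseteq \beta'^{-1}_g(U_n)$. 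Since a point has dense $\beta$-orbit precisely when its orbit meets every $U_n$, compactness shows that $\beta$ is minimal if and only if $\beta \in \bigcap_n \bigcup_F W(n,F)$, exhibiting the minimal systems as a $G_\delta$ set. By the Baire category theorem it then suffices to prove that each open set $V_n := \bigcup_F W(n,F)$ is dense.

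For density I would isolate the following approximation statement: given $\beta_0 \in \overline{\mathcal{S}}_{\mathcal{G}}(\alpha)$, a nonempty open $U \subseteq X$, and $\epsilon > 0$, there exist $\beta \in \mathcal{S}_{\mathcal{G}}(\alpha)$ with $d(\beta, \beta_0) < \epsilon$ and a finite $F$ with $X = \bigcup_{g \in F}\beta_g^{-1}(U)$. Since $\mathcal{S}_{\mathcal{G}}(\alpha)$ is dense in its closure, and since conjugation by a fixed $H \in \mathcal{G}_s$ is a homeomorphism of $\mathcal{H}(\Gamma, X)$ carrying the covering property for $U$ to the one for $H(U)$, I may assume $\beta_0 = \bar\alpha$. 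Shrinking $U$, take a box $V \times O$ with $V \subseteq Z$ and $O \subseteq Y$ nonempty open. Minimality of $\alpha$ furnishes a finite $F_0 \subseteq \Gamma$ with $Z = \bigcup_{g \in F_0} g^{-1}V$, so the base coordinate is automatically handled; minimality of $\mathcal{G} \curvearrowright Y$ furnishes $g_1, \dots, g_m \in \mathcal{G}$ with $Y = \bigcup_i g_i^{-1}O$.

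The heart of the argument is to build a continuous map $h\colon Z \to \mathcal{G}$ whose associated coboundary $c(g,z) = h_{g.z}^{-1}h_z$ sweeps through $g_1, \dots, g_m$ along $\alpha$-orbits while each one-step value $h_{s.z}^{-1}h_z$ stays uniformly close to the identity of $\mathcal{H}(Y)$ for $s$ in a prescribed finite set. Then $\beta := H^{-1}\bar\alpha H \in \mathcal{S}_{\mathcal{G}}(\alpha)$ satisfies $\beta_g(z,y) = (g.z, h_{g.z}^{-1}h_z\,y)$, so it is $\epsilon$-close to $\bar\alpha$, and combining the sweep over the fibre with the finite base-cover $F_0$ produces, for every $(z,y)$, some $g$ in a fixed finite set with $g.z \in V$ and $h_{g.z}^{-1}h_z\,y \in O$; continuity and compactness then yield the required finite $F$. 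Using path connectedness of $\mathcal{G}$ I would fix a loop based at the identity passing through $g_1, \dots, g_m$, the point being to spread this loop continuously, in small increments, along the orbits of $Z$.

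This last construction is where I expect the main obstacle to lie, and it is exactly where amenability of $\alpha$ is used. Because the metric $d$ is a supremum over all of $X$, the perturbation must be small at every point, so there is no room to conceal a ``seam'' where the increments are large; a purely combinatorial tiling of $Z$ by Rokhlin towers is unavailable for general $\Gamma$ and general $Z$. Instead the maps $\mu_n\colon Z \to \mathrm{Prob}(\Gamma)$ with $\sup_{z}\|s.\mu_n^z - \mu_n^{s.z}\|_1 \to 0$ play the role of the F\o lner sets from the $\mathbb{Z}$-case of \cite{GW}: they provide, \emph{uniformly in} $z$, an approximately invariant weighting along which the loop can be interpolated so that the individual increments $h_{s.z}^{-1}h_z$ are globally small while the accumulated cocycle still realizes every $g_i$. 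Controlling the error arising from the failure of exact invariance of $\mu_n$ — turning the approximate invariance into a genuinely continuous $h$ with uniformly small one-step oscillation — is the delicate part; once it is done, the density of each $V_n$ follows, and Baire's theorem completes the proof.
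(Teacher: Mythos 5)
Your Baire-category frame (the open sets $W(n,F)$, the reduction of density to the single case $\beta_0=\bar\alpha$ via conjugation by elements of $\mathcal{G}_s$, the path through $g_1,\dots,g_m$ supplied by path connectedness, and amenability playing the role of the F\o lner sets of Glasner--Weiss) matches the paper's skeleton exactly; the problem is that your proof stops precisely at the step that constitutes its actual content. You never construct the map $h\colon Z\to\mathcal{G}$: you state the specification it must satisfy and explicitly flag the construction as ``the delicate part,'' which is a genuine gap, not a compression. The paper closes it with two ideas absent from your sketch. First, a scalar reduction: with $h\colon[0,1]\to\mathcal{G}$ a path through the covering homeomorphisms and $\delta$ a modulus of continuity for it (so $|t_1-t_2|<\delta$ implies $d(h_{t_1}^{-1}h_{t_2},\id_Y)<\epsilon$), it suffices to produce a continuous $\theta\colon Z\to[0,1]$ with $|\theta(s.z)-\theta(z)|<\delta$ for all $z\in Z$, $s\in S$, and with $\theta(K)=[0,1]$ for some $K\subset W$; one then sets $g_z:=h_{\theta(z)}$. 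Second, an averaging trick producing $\theta$: after perturbing $\mu$ so that ${\rm supp}(\mu^w)\subset F$ for all $w\in W$ with $F$ finite, one chooses a compact $K\subset W$ homeomorphic to the Cantor set (crucial, since $Z$ may contain no arc at all --- e.g.\ $Z$ the Cantor set --- so a continuous surjection $\theta_0\colon K\to[0,1]$ is the only way to sweep the whole path), extends $\theta_0$ equivariantly over the translates $(gK)_{g\in F^{-1}}$, Tietze-extends to $\tilde\theta\colon Z\to[0,1]$, and defines $\theta(z):=\sum_{g\in\Gamma}\mu^z(g^{-1})\tilde\theta(g.z)$. Approximate invariance gives $|\theta(s.z)-\theta(z)|\le\|s.\mu^z-\mu^{s.z}\|_1<\delta$, while ${\rm supp}(\mu^w)\subset F$ gives $\theta|_K=\theta_0$. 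Your phrase ``interpolating the loop along the approximately invariant weighting'' describes what this formula achieves, but asserting that the interpolation can be done is exactly the theorem's difficulty.

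A symptom that the gap is real: your argument never invokes topological freeness of $\alpha$, a hypothesis of the theorem that the paper genuinely uses --- it is what allows shrinking $W$ so that the sets $(g.W)_{g\in F^{-1}}$ are pairwise disjoint, which is what makes the equivariant prescription $\theta_0(g.z):=\theta_0(z)$ well defined; without that disjointness the extension can be inconsistent and the averaged $\theta$ need not restrict to $\theta_0$ on $K$. A secondary deviation: you demand, for each $(z,y)$, a single $g$ with simultaneously $g.z\in V$ and $c(g,z)y\in O$, which is stronger than needed and harder to arrange. The paper argues more softly: since every $\tilde h_i$ is realized as $g_w$ for some $w\in K\subset W$, the set $G(U)$ meets every fiber $Z\times\{y\}$ in a nonempty relatively open set, and because $\bar\alpha$ acts trivially in the $Y$-coordinate, minimality of $\alpha$ applied fiberwise already yields $\bigcup_{g\in\Gamma}\bar\alpha_g(G(U))=X$, with compactness extracting the finite set afterwards. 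In short: correct frame and correct identification of where amenability enters, but the central construction --- the scalar reduction plus the $\mu$-average, with its topological-freeness-based equivariant extension --- is missing.
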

\begin{proof}
Let $\mathcal{G}\curvearrowright Y$ and $\alpha\colon \Gamma\curvearrowright Z$ be as in the statement.
For an open set $U$ of $X=Z\times Y$,
we define the subset $\mathcal{E}_U$ of $\overline{\mathcal{S}}_{\mathcal{G}}(\alpha)$ to be
$$\mathcal{E}_U:=\{\beta\in \overline{\mathcal{S}}_{\mathcal{G}}(\alpha):\bigcup_{g\in \Gamma}\beta_g(U)=X \}.$$
Since $X$ is compact, it is not hard to check that
the set $\mathcal{E}_U$ is open in $\overline{\mathcal{S}}_{\mathcal{G}}(\alpha)$.

Let $(U_n)_n$ be a countable basis of $X$.
We observe that
an element in $\overline{\mathcal{S}}_{\mathcal{G}}(\alpha)$ is minimal
if and only if it is contained in $\bigcap_n \mathcal{E}_{U_n}$.
Therefore, thanks to the Baire category theorem, our claim follows once we show the density of $\mathcal{E}_U$ in $\overline{\mathcal{S}}_{\mathcal{G}}(\alpha)$ for each non-empty open set $U$ in $X$. 
To see this, it is enough to show the following claim.
For any $H\in \mathcal{G}_s$ and any non-empty open set $U\subset X$,
$H^{-1}\circ\bar{\alpha}\circ H \in {\rm cl}(\mathcal{E}_U).$
This is equivalent to the condition
$\bar{\alpha}\in {\rm cl}(H\mathcal{E}_UH^{-1})$.
A direct computation shows that
$H\mathcal{E}_UH^{-1}=\mathcal{E}_{H(U)}$.
Since $H(U)$ is again a non-empty open set,
now it is enough to show the following statement.
For any non-empty open set $U\subset X$, we have $\bar{\alpha}\in {\rm cl}(\mathcal{E}_U)$.
Now let $U$ be a non-empty open set in $X$.
Let $S$ be a finite subset of $\Gamma$ and let $\epsilon>0$.
Take non-empty open sets $V\subset Y$ and $W\subset Z$ with
$W\times V\subset U$.
By assumption, there are $\tilde{h}_0, \ldots, \tilde{h}_n\in \mathcal{G}$
satisfying $\bigcup_{0\leq i\leq n} \tilde{h}_i(V)=Y$.
Since $\mathcal{G}$ is path-connected,
there is a continuous map $h \colon [0, 1]\rightarrow \mathcal{G}$
satisfying $h_{i/n}=\tilde{h}_i$ for $0\leq i\leq n$.
By the continuity of $h$,
there is $\delta>0$ such that the condition
$|t_1-t_2|<\delta$ implies $d(h_{t_1}^{-1}h_{t_2}, \id_Y)<\epsilon$.
Now we use the amenability of $\alpha$ to choose a continuous map $\mu\colon Z\rightarrow {\rm Prob}(\Gamma)$ satisfying
$\sup_{z\in Z}\|s.\mu^z-\mu^{s.z}\| _1<\delta$ for all $s\in S$.
By perturbing $\mu$ within a small error and replacing $W$ by a smaller one,
we may assume that there is a finite set $F\subset \Gamma$
such that ${\rm supp}(\mu^w)\subset F$ for all $w\in W$.
(Cf.\ Lemma 4.3.8 of \cite{BO}.)
Since $\alpha$ is topologically free,
by replacing $W$ by a smaller one further, we may assume that the open sets $(g.W)_{g\in F^{-1}}$ are mutually disjoint.
Since $W$ is a locally compact metrizable space without isolated points,
we can choose a compact subset $K$ of $W$ homeomorphic to the Cantor set.
(To see this, take a sequence of families $((K_{i_1, \ldots, i_n})_{0\leq i_1, \ldots, i_n \leq 1})_{n\in \mathbb{N}}$
satisfying the following conditions.
Each family $(K_{i_1, \ldots, i_n})_{i_1, \ldots, i_n}$
consists of pairwise disjoint closed subsets of $W$ with non-empty interior,
$K_{i_1, \ldots, i_n}\subset K_{i_1, \ldots, i_{n-1}}$ for any $i_1, \ldots, i_n$,
and $\max_{i_1, \ldots, i_n}\{ {\rm diam}(K_{i_1, \ldots, i_n})\}$ converges to $0$ as $n$ tends to infinity.
Then the set $\bigcap_n \bigcup _{i_1, \ldots, i_n} K_{i_1, \ldots, i_n}$ gives the desired subset.)

Next take a continuous surjection
$\theta_0\colon K\rightarrow [0, 1]$.
Extend $\theta_0$ to a map $\bigsqcup_{g\in F^{-1}} g K\rightarrow [0, 1]$
by the formula
$\theta_0(g.z):=\theta_0(z)$ for $g\in F^{-1}$ and $z\in K$.
Then take a continuous extension
$\tilde{\theta}\colon Z\rightarrow [0, 1]$ of $\theta_0$.
Using $\tilde{\theta}$ and $\mu$, we define
$\theta\colon Z\rightarrow [0, 1]$
by
$$\theta(z):=\sum_{g\in \Gamma}\mu^z(g^{-1})\tilde{\theta}(g.z).$$
Note that the continuity of $\tilde{\theta}$ and $\mu$ implies
that of $\theta$.
For $z\in K$, since ${\rm supp}(\mu^z)\subset F$,
we have $\theta(z)=\theta_0(z)$.
In particular, $\theta(K)=[0, 1]$.
Moreover, for $z\in Z$ and $s\in S$,
we have
\begin{eqnarray*}
|\theta(s.z)-\theta(z)|&=& |\sum_{g\in \Gamma}(\mu^{s.z}(g^{-1})\tilde{\theta}(gs.z)-\mu^{z}(g^{-1})\tilde{\theta}(g.z))|\\
&=&|\sum_{g\in \Gamma}(\mu^{s.z}(g^{-1})\tilde{\theta}(gs.z)-\mu^{z}(s^{-1}g^{-1})\tilde{\theta}(gs.z))|\\
&\leq&\|\mu^{s.z}-s.\mu^z\|_1\\
&<&\delta.
\end{eqnarray*}

Now define the map $g\colon Z\rightarrow \mathcal{G}$ by
$g_z:=h_{\theta(z)}$ for $z\in Z$.
We will show that the corresponding homeomorphism $G\in \mathcal{G}_s$ satisfies the following conditions.
\begin{enumerate}
\item
$d(\bar{\alpha}_s, G^{-1}\circ \bar{\alpha}_s \circ G)<\epsilon$ for $s\in S$.
\item
$G^{-1}\circ \bar{\alpha} \circ G\in \mathcal{E}_U$.
\end{enumerate}
Since $U$, $\epsilon$, and $S$ are arbitrarily, this ends the proof.
Let $s\in S$ and $(z, y)\in X$.
Then a direct computation shows that
$$(G^{-1}\circ \bar{\alpha}_s \circ G)(z, y)
=(\alpha_s(z), g_{s.z}^{-1} g_z(y)).$$
Since $d(g_{s.z}^{-1} g_z, \id_Y)< \epsilon$ for all $z \in Z$,
we obtain the first condition.

For the second condition,
note that $G^{-1}\circ \bar{\alpha} \circ G\in \mathcal{E}_U$
if and only if
$\bigcup_{g\in \Gamma} \bar{\alpha}_g (G(U))=X$ holds.
By the choice of $G$,
for any $0\leq i \leq n$,
there is $w\in W$ satisfying $g_w= \tilde{h}_{i}$.
It follows that for any $0\leq i \leq n$, there is $w\in W$
with $\{w \}\times \tilde{h}_i(V)\subset G(U)$.
Since $\bigcup_i \tilde{h}_i(V)=Y$, this shows that
for any $y\in Y$,
the intersection $(Z\times \{y\}) \cap G(U)$ is non-empty (which is open in $Z\times \{y \}$).
This with the minimality of $\alpha$ shows that
$\bigcup_{g\in \Gamma}\bar{\alpha}_g(G(U))=X$. 
\end{proof}
\begin{Rem}
Theorem \ref{Thm:min} does not hold when $Z$ is not metrizable.
To see this, consider a minimal subsystem $\alpha\colon \Gamma \curvearrowright Z$ of $\Gamma \curvearrowright \beta \Gamma$. (Note that $\alpha$ is amenable when $\Gamma$ is exact.)
Then $\alpha$ is the universal minimal $\Gamma$-system (see Theorem 1.24 of \cite{Gla}).
Thus it does not have a nontrivial minimal extension.
\end{Rem}
\begin{Rem}
Let $\alpha\colon \Gamma \curvearrowright Z$ be a minimal topologically free dynamical system of an amenable group $\Gamma$
whose crossed product is quasi-diagonal.
Then for any $\beta \in \overline{\mathcal{S}}_{\mathcal{G}}(\alpha)$,
its crossed product is quasi-diagonal.
Indeed, since $\beta$ is a limit of conjugations of $\bar{\alpha}$,
there is a continuous field of \Cs -algebras over
$\mathbb{N}\cup \{\infty \}$, the one-point compactification of $\mathbb{N}$,
whose fiber at $n\in \mathbb{N}$ is isomorphic to $C(Y)\otimes (C(Z)\rtimes_\alpha \Gamma)$
and the one at $\infty$ is isomorphic to $C(X)\rtimes _\beta \Gamma$.
(See Corollary 3.6 of \cite{Rie}.)
Now Lemma 3.10 of \cite{CDE} proves the quasi-diagonality of $C(X)\rtimes _\beta \Gamma$.
\end{Rem}

\section{Pure infiniteness of crossed products of minimal skew products}\label{Sec:pi}
\subsection{Finite filling property for \'etale groupoids}
To study the pure infiniteness of crossed products of dynamical systems arising from Theorem \ref{Thm:min},
we introduce a notion of the finite filling property for \'etale groupoids.
First recall from \cite{JR} the finite filling property for dynamical systems.
Although their definition and result also cover noncommutative \Cs -dynamical systems,
in this paper, we concentrate on the commutative case.
See \cite{JR} for the general case.
We remark that, although the following formulation is slightly different from the original one,
it is easily checked that they are equivalent.
(Cf.\ Definition 0.1, Proposition 0.3, and Remark 0.4 of \cite{JR}.)
\begin{Def}
A dynamical system $\Gamma\curvearrowright X$ is said to have the $n$-filling property
if for any non-empty open set $U$ of $X$,
there are $n$ elements $g_1, \ldots, g_n \in \Gamma$
with $\bigcup_{i=1}^n g_i(U)=X$.
We say that a dynamical system has the finite filling property if it has the $n$-filling property for some $n\in \mathbb{N}$.
\end{Def}
Note that the finite filling property implies minimality.
In \cite{JR}, it is shown that the finite filling property of a topological dynamical system
implies the pure infiniteness of the reduced crossed product
by a similar way to the one in \cite{LaS}.
However, as shown in \cite{JR}, the $n$-filling property is inherited to factors.
This makes the usage of the $n$-filling property restrictive in our application.
To avoid this difficulty, we introduce a notion of the finite filling property
for \'etale groupoids, which can be regarded as a localized version of \cite{JR}.
This helps to construct minimal skew products with the purely infinite reduced crossed product.

Next we recall that a subset $U$ of an \'etale groupoid $G$ is said to be a $G$-set
if both the range and source map are injective on $U$.
For two $G$-sets $U$ and $V$, we set
$UV:=\{uv\in G: u\in U, v\in V, s(u)= r(v)\}$.
Obviously it is again a $G$-set.
Furthermore, if both $U$ and $V$ are open, then $UV$ is again open.
Recall also that an \'etale groupoid is said to be minimal
if for any $x\in G^{(0)}$,
the set $\{r(u):u\in G, s(u)=x\}$ is dense in $G^{(0)}$.
Note that the unit space $G^{(0)}$ has no isolated points whenever $G$ is minimal.
(Recall that $G^{(0)}$ is always assumed to be infinite.)
\begin{Def}
Let $G$ be an \'etale groupoid.
For a natural number $n$, we say that $G$ has the $n$-filling property if every non-empty open set $W$ of $G^{(0)}$ satisfies the following conditon.
There are $n$ open $G$-sets $U_1, \ldots, U_n$ satisfying
$$\bigcup_{i=1}^n r(U_i W)=G^{(0)}.$$
For short, we say that a dynamical system has the weak $n$-filling (resp.\ weak finite filling) property if its transformation groupoid has the $n$-filling (resp.\ finite filling) property. 
\end{Def}

Obviously, for dynamical systems, the $n$-filling (resp.\ finite filling) property implies the weak $n$-filling (resp.\ weak finite filling) property.
However, the converses are not true.

We also remark that it is possible to define the weak finite filling property without going through the transformation groupoid.
However, this specialization does not make the arguments below easier
and this generality makes notation simpler.
Considering applications elsewhere also, we study the property under this generality.

When the unit space $G^{(0)}$ has finite covering dimension,
we have a useful criteria for the finite filling property.
The following definition is inspired from \cite{Mat} and \cite{RS}.
\begin{Def}
We say that an \'etale groupoid $G$ is purely infinite if
for any non-empty open set $U$ of $G^{(0)}$,
there is a non-empty open subset $V$ of $U$ with the following condition.
There are open $G$-sets $U_1$ and $U_2$
such that $r(U_i) \subset V\subset s(U_i)$ for $i= 1, 2$
and $r(U_1)$ and $r(U_2)$ are disjoint.
We say that a dynamical system is purely infinite if its transformation groupoid is purely infinite.
\end{Def}
We remark that Matui \cite{Mat} has introduced pure infiniteness for totally disconnected \'etale groupoids
for the study of the topological full groups.
Clearly, our definition is weaker than Matui's one.
We will see later that our definition of pure infiniteness coincides with Matui's one
for minimal totally disconnected \'etale groupoids.
\begin{Prop}\label{Prop:pifill}
Let $G$ be a minimal purely infinite \'etale groupoid
and assume that $\dim(G^{(0)})=n<\infty$.
Then $G$ has the $(n+1)$-filling property.
\end{Prop}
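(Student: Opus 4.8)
The plan is to translate the filling condition into a covering statement about partial homeomorphisms and then to assemble the required $n+1$ $G$-sets out of many small pieces. First I would record the elementary computation that for an open $G$-set $U$ with $s(U)\subseteq W$ one has $UW=U$, hence $r(UW)=r(U)$; in general $UW=U\cap s^{-1}(W)$, so $r(UW)$ is the image of $W\cap s(U)$ under the partial homeomorphism $\phi_U:=r\circ(s|_U)^{-1}$. Thus to establish the $(n+1)$-filling property it suffices to produce $n+1$ open $G$-sets $U_0,\dots,U_n$, each with source contained in $W$, whose ranges cover $G^{(0)}$. The assembly principle I would use repeatedly is that a union of open $G$-sets is again an open $G$-set as soon as both the ranges and the sources are pairwise disjoint.

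Three ingredients feed the construction. First, applying pure infiniteness to $W$ yields a nonempty open $V\subseteq W$ and open $G$-sets $U_1,U_2$ with $r(U_i)\subseteq V\subseteq s(U_i)$ and $r(U_1)\cap r(U_2)=\emptyset$; the maps $\phi_{U_i}$ then carry $V$ onto two disjoint copies of itself inside $V$, and iterating this doubling along a binary tree produces, for any prescribed $N$, pairwise disjoint open sets $V_1,\dots,V_N\subseteq V$ each $G$-set-homeomorphic to $V$ via open $G$-sets $C_l$ with $s(C_l)=V$ and $r(C_l)=V_l$. These $V_l$ will serve as disjoint source slots. Second, minimality and compactness of $G^{(0)}$ give a cover of $G^{(0)}$ by finitely many translates of the small set $V$: open $G$-sets $T_1,\dots,T_m$ with $s(T_j)\subseteq V$ and $\bigcup_j r(T_j)=G^{(0)}$. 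Third, since $\dim(G^{(0)})\le n$ and $G^{(0)}$ is compact Hausdorff, the decomposition theorem of covering dimension refines $\{r(T_j)\}_j$ to a finite open cover $\{P_\lambda\}_\lambda$ that splits into $n+1$ subfamilies $\Lambda_0,\dots,\Lambda_n$, each consisting of pairwise disjoint sets, with $P_\lambda\subseteq r(T_{j(\lambda)})$ for a chosen index $j(\lambda)$.

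It remains to turn each colour into a single $G$-set, and this is the crux. For $\lambda$ put $T_\lambda':=T_{j(\lambda)}\cap r^{-1}(P_\lambda)$, an open $G$-set with $r(T_\lambda')=P_\lambda$ and $s(T_\lambda')\subseteq V$. Within a fixed colour the ranges $P_\lambda$ are already disjoint, but the sources all lie in $V$ and typically overlap, so their union is not yet a $G$-set. This is exactly where the slots from the first ingredient are used: fixing $N\ge\max_c|\Lambda_c|$, I would assign to the members of a colour $\Lambda_c$ distinct slots $V_l$ and re-route each source by setting $S_\lambda:=(C_l|_{s(T_\lambda')})^{-1}$, an open $G$-set carrying $W_\lambda:=C_l(s(T_\lambda'))\subseteq V_l$ homeomorphically onto $s(T_\lambda')$. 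Then $T_\lambda'S_\lambda$ has range $P_\lambda$ and source $W_\lambda\subseteq V_l$, so along $\lambda\in\Lambda_c$ both ranges and sources are pairwise disjoint and $U_c:=\bigcup_{\lambda\in\Lambda_c}T_\lambda'S_\lambda$ is an open $G$-set with $s(U_c)\subseteq V\subseteq W$ and $r(U_c)=\bigcup_{\lambda\in\Lambda_c}P_\lambda$. Hence $r(U_cW)=r(U_c)$, and summing over the $n+1$ colours gives $\bigcup_{c=0}^n r(U_cW)=\bigcup_\lambda P_\lambda=G^{(0)}$, as required.

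The main obstacle is precisely the double disjointness needed to merge pieces into a single $G$-set: the dimension hypothesis supplies disjoint ranges through the $(n+1)$-fold decomposition, while pure infiniteness supplies disjoint sources through the iterated doubling, and the two must be made compatible simultaneously. I expect the only genuinely delicate points to be the bookkeeping in the iterated doubling and the care needed to keep every source inside the fixed small set $V$, so that the re-routing can be performed using copies of $V$.
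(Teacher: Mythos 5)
Your proposal is correct and follows essentially the same route as the paper: iterate the pure-infiniteness doubling to manufacture arbitrarily many open $G$-sets with disjoint ranges inside the small set, use minimality and compactness to cover $G^{(0)}$ by ranges of finitely many $G$-sets with sources there, and then apply the dimension-theoretic $(n+1)$-coloring refinement to merge pieces with pairwise disjoint ranges and sources into $n+1$ open $G$-sets. The only (cosmetic) difference is bookkeeping: the paper disjointifies sources \emph{before} the coloring by setting $Z_i := W_i V_i^{-1}$ and exploits that restricting a single $G$-set to disjoint range pieces automatically yields disjoint sources, whereas you re-route each colored piece into its own slot \emph{after} the refinement --- the same mechanism applied in a different order.
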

\begin{proof}
Let $U$ be a non-empty open subset of $G^{(0)}$.
Replacing $U$ by a smaller one, we may assume
that there are open $G$-sets $U_1$ and $U_2$ such that
$r(U_i)\subset U\subset s(U_i)$ for $i=1, 2$ and $r(U_1)$ and $r(U_2)$ are disjoint.
We first show that for any $N\in \mathbb{N}$, there are $N$ open $G$-sets $V_1, \ldots, V_N$
satisfying $r(V_i)\subset U\subset s(V_i)$ for $i=1, \ldots, N$ and the ranges $r(V_1), \ldots, r(V_N)$ are mutually disjoint.
To see this, first take $M\in \mathbb{N}$ with $2^M\geq N$ and then
take $N$ mutually distinct elements from the set $$\{U_{i_1} U_{i_2} \cdots U_{i_M}: i_k=1 {\rm\ or\ }2 {\rm\ for\ each\ }k\}.$$
Then it gives the desired sequence.

By the compactness of $G^{(0)}$ and the minimality of $G$, for some natural number $N$, there are $N$ open
$G$-sets $W_1, \ldots, W_N$ with
$\bigcup_{i=1}^N r(W_i U)=G^{(0)}.$
Take $N$ open $G$-sets $V_1, \ldots, V_N$ as in the previous paragraph
and put $Z_i:=W_i V_i^{-1}$ for each $i$.
Then we have 
$$\bigcup_{i=1}^N r(Z_i U)\supset \bigcup_{i=1}^N r(W_i U)=G^{(0)}.$$
Note that since $s(Z_i)\subset r(V_i)$, the sources of $Z_i$'s are mutually disjoint.
Since $\dim (G^{(0)})=n$,
we can choose a refinement $(Y_j)_{j\in J}$ of $(r(Z_i U))_{i=1} ^N$
with the decomposition $J=J_0\sqcup J_1 \sqcup \cdots \sqcup J_n$
such that the members of the family $(Y_j)_{j\in J_k}$ are mutually disjoint for each $k$.
Choose a map $\varphi\colon J \rightarrow \{1, \ldots, N \}$
satisfying $Y_j \subset r(Z_{\varphi(j)} U)$ for each $j\in J$.
Set
$X_k:= \bigcup_{j\in J_k} Y_j Z_{\varphi(j)}$ for each $k$.
Then it is not hard to check that each $X_k$ is an open $G$-set and that
$r(X_k U)=\bigcup_{j\in J_k}Y_j$.
This shows 
$\bigcup_{k=0}^{n}r(X_k U)=G^{(0)}$.
\end{proof}
\begin{Rem}\label{Rem:pi}
The argument in Remark 4.12 of \cite{Mat} shows that
for totally disconnected \'etale groupoids,
the finite filling property implies pure infiniteness in Matui's sense.
Thus for a minimal totally disconnected \'etale groupoid $G$, pure infiniteness in Matui's sense \cite{Mat},
that in our sense, the finite filling property, and the $1$-filling property are equivalent.
(Here total disconnectedness is used to replace open $G$-sets by clopen ones.)
\end{Rem}
Next we see a few examples of dynamical systems with the weak finite filling property.
The following three examples are particularly important for us.
See \cite{JR} for more examples of dynamical systems with
the finite filling property.
\begin{Exm}\label{Exm:1fill}
It follows from the proof of Theorem 6.11 of \cite{RS} that every countable non-amenable exact group
admits an amenable minimal free purely infinite dynamical system on the Cantor set.
(To see this, use the equivalence of conditions (i) and (iii) in Proposition 5.5 in the proof of Proposition 6.8.)
By Proposition \ref{Prop:pifill}, it has the weak $1$-filling property.
We remark that these dynamical systems almost never have the finite filling property.
\end{Exm}
Recall that a manifold is said to be closed if it is compact and has no boundaries.
\begin{Lem}\label{Lem:man}
Let $M$ be a connected closed topological manifold.
Let $\mathcal{H}_0(M)$ denote the path connected component of $\mathcal{H}(M)$ containing the identity.
Then the action $\mathcal{H}_0(M)\curvearrowright M$ has the finite filling property.
\end{Lem}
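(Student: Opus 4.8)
The plan is to exploit the homogeneity of connected manifolds together with the fact that, inside a Euclidean chart, an arbitrarily small ball can be spread out by a \emph{single} homeomorphism so as to swallow any prescribed relatively compact set. The subtle point of the lemma is not merely that images of $U$ cover $M$, but that the number of homeomorphisms needed can be bounded \emph{independently} of $U$; a naive compactness argument produces a bound depending on $U$, and it is precisely the chart structure of $M$ that repairs this. Throughout I write $d=\dim M$ and use that $\mathcal{H}_0(M)$ consists exactly of the homeomorphisms isotopic to the identity.

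First I would fix, once and for all, a finite atlas adapted to $M$. Since $M$ is compact, I can choose open sets $W_1,\dots,W_m$ covering $M$, each carrying a homeomorphism $\phi_j\colon W_j\to\mathbb{R}^d$, together with open subsets $V_j:=\phi_j^{-1}(B(0,r_j))$ whose closures are compact and contained in $W_j$ and which still cover $M$; the integer $m$ depends only on $M$. I claim $\mathcal{H}_0(M)$ has the $m$-filling property. Indeed, given a non-empty open set $U\subset M$, it suffices to produce for each $j$ a single $g_j\in\mathcal{H}_0(M)$ with $V_j\subset g_j(U)$, for then
\[
\bigcup_{j=1}^{m} g_j(U)\ \supset\ \bigcup_{j=1}^{m} V_j\ =\ M.
\]

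To construct $g_j$ I would proceed in two steps. Step one uses transitivity of the action $\mathcal{H}_0(M)\curvearrowright M$: for a connected manifold without boundary, the orbit of any point is open, since a point can be carried to any nearby point by a compactly supported, identity-isotopic homeomorphism of a surrounding chart extended by the identity; as $M$ is connected, the action is therefore transitive. Picking $x\in U$ and $y\in V_j$, I obtain $g^{(1)}\in\mathcal{H}_0(M)$ with $g^{(1)}(x)=y$, and since $g^{(1)}(U)$ is an open neighbourhood of $y$ it contains a small coordinate ball $B$ about $y$ with closure inside $W_j$. Step two is purely Euclidean: there is a homeomorphism of $\mathbb{R}^d$, equal to the identity outside a large ball and isotopic to the identity through such maps, carrying $\phi_j(B)$ onto a set containing $\overline{B(0,r_j)}$ (equivalently, its inverse compresses the big ball into the small one, which is elementary in $\mathbb{R}^d$). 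Transporting this map back through $\phi_j$ and extending by the identity off $W_j$ yields $g^{(2)}\in\mathcal{H}_0(M)$ with $g^{(2)}(B)\supset V_j$. Setting $g_j:=g^{(2)}\circ g^{(1)}$ gives $g_j(U)\supset g^{(2)}(B)\supset V_j$, as required.

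The main obstacle is the homogeneity input in step one, namely that $\mathcal{H}_0(M)$ acts transitively on a connected boundaryless manifold; this is the genuine piece of manifold topology, whereas the adapted chart cover and the Euclidean expansion are routine and are exactly what deliver the uniform bound $m$. (The only further care needed is to verify that the compactly supported maps built in both steps are isotopic to the identity, so that they genuinely lie in $\mathcal{H}_0(M)$, which holds since each can be realised as the time-one map of a compactly supported isotopy.)
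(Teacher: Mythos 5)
Your proposal is correct and follows essentially the same route as the paper's proof: transitivity of $\mathcal{H}_0(M)\curvearrowright M$ (orbits are open, $M$ connected) to move $U$ into a fixed chart, followed by a compactly supported radial expansion inside that chart, extended by the identity and isotopic to the identity, with a fixed finite atlas providing the uniform bound on the number of homeomorphisms. The only cosmetic differences are that you make explicit the shrunken precompact subcover $(V_j)$ that the paper leaves as a ``standard argument for compactness,'' and you assert the Euclidean expansion as elementary where the paper writes out the radial function $f$ and the path $t\mapsto\psi_{(1-t)f+tk}$ explicitly.
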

\begin{proof}
It is not hard to show that
the above action is transitive by using the connectedness of $M$
with the fact that $M$ is locally homeomorphic to $\mathbb{R}^n$.

Take an open cover $U_1, \ldots, U_N$ of $M$ each of which is homeomorphic to $\mathbb{R}^n$.
We show that for any non-empty open set $V$ in $M$, for any $i$, and for any compact subset $K$ of $U_i$,
there is an element $g\in \mathcal{H}_0(M)$ with
$g(V)\supset K$.
Since $M$ is compact, the claim with a standard argument for compactness shows the $N$-filling property
of the action in the question.
Since the action is transitive, replacing $V$ by $g(V)$ for a suitable $g\in \mathcal{H}_0(X)$ and replacing it by a smaller one further,
we may assume that $V$ is contained in $U_i$.
Take a homeomorphism
$\varphi\colon U_i\rightarrow \mathbb{R}^n$ satisfying
$0 \in \varphi(V)$.
Take a sufficiently large positive number $\lambda >0$
with $\varphi(K)\subset \lambda\varphi(V)$.
Then choose a continuous function $f\colon \mathbb{R}_{\geq 0} \rightarrow \mathbb{R}_{\geq 0}$
satisfying the following conditions.
\begin{enumerate}
\item
For $t \leq {\rm diam}(\varphi(V))$, we have $f(t)=\lambda$.
\item
For all sufficiently large $t$, we have $f(t)= 1$.
\item
The function $t\mapsto tf(t)$ is strictly monotone increasing.
\end{enumerate}
Now set $\varphi_f(x):=\varphi^{-1}(f(\| \varphi(x)\|) \varphi(x))$ for $x\in U_i$.
Here $\|\cdot \|$ denotes the Euclidean norm on $\mathbb{R}^n$.
From the assumptions on $f$,
the map $\varphi_f$ is a homeomorphism on $U_i$
satisfying $K \subset \varphi_f(V)$.
We extend $\varphi_f$ to a homeomorphism $\psi_f$ on $M$ as follows.
\[\psi_f(x):=\left\{ \begin{array}{ll}
\varphi_f(x) & {\rm if\ } x\in U_i \\
x &{\rm if\ } x\in M\setminus U_i\\
\end{array}.\right.\]
It is clear from the properties of $f$ that $\psi_f$ is indeed a homeomorphism on $M$.
Clearly we have $K \subset \psi_f(V)$.
Moreover, the map
$t\in [0, 1] \mapsto \psi_{(1-t)f+ tk}$ defines
a continuous path in $\mathcal{H}(M)$ from $\psi_f$ to the identity.
Here $k$ denotes the constant function of value $1$ defined on $\mathbb{R}_{\geq 0}$.
Thus we have $\psi_f \in \mathcal{H}_0(M)$.
\end{proof}
Next we see examples of finite filling actions of path-connected groups on infinite dimensional spaces.
Let $Q:=\prod_{\mathbb{N}} [0, 1]$ be the Hilbert cube.
Recall that a topological space is said to be a Hilbert cube manifold if
there is an open cover each of the member is homeomorphic to an open subset of $Q$.
It is not hard to show that open subsets of $Q$ in the definition can be taken to be $[0, 1)\times Q$.
(See Theorem 12.1 of \cite{Ch} for instance.)
Obvious examples are $Q$ itself and the product
of $Q$ and a topological manifold (possible with boundary).
We refer the reader to \cite{Ch} for more information of Hilbert cube manifolds.
\begin{Lem}\label{Lem:Qman}
Let $M$ be a connected compact Hilbert cube manifold.
Then the action $\mathcal{H}_0(M)\curvearrowright M$ has
the finite filling property.
\end{Lem}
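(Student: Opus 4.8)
The plan is to follow the architecture of the proof of Lemma \ref{Lem:man} as closely as possible, replacing the Euclidean charts $\mathbb{R}^n$ by the model charts $[0,1)\times Q$ of a Hilbert cube manifold. First I would record that the action $\mathcal{H}_0(M)\curvearrowright M$ is transitive: by the local homogeneity of $Q$-manifolds each $\mathcal{H}_0(M)$-orbit is open, so the connectedness of $M$ forces a single orbit. Using the compactness of $M$, I would then fix finitely many charts $U_1,\dots,U_N$ together with compact sets $K_i\subset U_i$ whose interiors cover $M$. Exactly as in Lemma \ref{Lem:man}, the $N$-filling property for a given non-empty open $V$ reduces to the swallowing statement that for each $i$ there is $g_i\in\mathcal{H}_0(M)$ with $g_i(V)\supset K_i$, since then $\bigcup_i g_i(V)\supset\bigcup_i K_i=M$. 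Using transitivity to move $V$ inside the chart $U_i$ and shrinking it, I may assume $V\subset U_i$ and work entirely in the model $[0,1)\times Q$, where $K:=K_i$ is compact and hence contained in $[0,1-\eta]\times Q$ for some $\eta>0$.

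The mechanism for swallowing comes from a cone structure on the chart. I would identify $[0,1)\times Q$ with the open cone on $Q$ by collapsing the face $\{0\}\times Q$ --- a cellular copy of $Q$ that is a Z-set --- to an apex $*$; that the resulting quotient is again homeomorphic to the chart is a standard fact of Hilbert cube manifold theory (see \cite{Ch}), with radial coordinate $t\in[0,1)$. The decisive feature of this picture is that every neighborhood of $*$ contains a full radial slab $\{t<\delta\}$, so it meets every direction of the $Q$-cross-section at once. Consequently a radial expansion $R_\phi([q,t]):=[q,\phi(t)]$, where $\phi\colon[0,1)\to[0,1)$ is a homeomorphism with $[0,1-\eta]\subset\phi([0,\delta))$ and $\phi=\id_{[0,1)}$ for $t$ near $1$, is a homeomorphism of the cone supported in a compact set $[0,t_1]\times Q$ with $t_1<1$; hence it extends by the identity to $M$ and, through the path $\phi_s:=(1-s)\phi+s\cdot\id_{[0,1)}$, lies in $\mathcal{H}_0(M)$. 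Since $R_\phi(\{t<\delta\})\supset\{t\le 1-\eta\}\supset K$, it now suffices to move $V$ so that its image is a neighborhood of the apex, for then that image contains a slab $\{t<\delta\}$.

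The hard part is exactly this last reduction, and it is where genuine infinite-dimensionality is unavoidable: in contrast to the Euclidean stretch of Lemma \ref{Lem:man}, carrying a small open set onto an apex neighborhood forces one to spread $V$ across the entire compact cross-section $Q$, something no scaling in the $Q$-directions can achieve by compactness. I would supply the required homeomorphism from the strong homogeneity and absorption properties of $Q$-manifolds: after arranging $V\subset\{t<1-\eta\}$, one uses the homeomorphism extension theorem together with the Z-set techniques of \cite{Ch} to move a chosen point of $V$ to the apex by a homeomorphism of the closed subcone $\{t\le 1-\eta\}\cong Q$ that fixes the outer face $\{t=1-\eta\}$; being supported in the chart and isotopic to $\id_M$, this homeomorphism automatically carries a neighborhood of the chosen point onto an apex neighborhood. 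Composing this mover with $R_\phi$ and extending by the identity produces the desired $g_i\in\mathcal{H}_0(M)$ with $g_i(V)\supset K_i$, and assembling the $g_i$ over $i=1,\dots,N$ yields the $N$-filling property. I expect verifying this infinite-dimensional engulfing step --- and checking that all homeomorphisms involved can be taken supported in a single chart and isotopic to the identity --- to be the principal technical obstacle.
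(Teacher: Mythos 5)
Your proposal is correct in outline and shares the paper's skeleton---transitivity of $\mathcal{H}_0(M)\curvearrowright M$ via open orbits, a finite chart cover with compact sets $K_i$, and the reduction of $N$-filling to swallowing a compact set inside a single chart $[0,1)\times Q$ by a compactly supported homeomorphism isotopic to the identity---but your in-chart engulfing mechanism is genuinely different from the paper's. The paper never forms the cone: it shrinks $V$ to a \emph{basic} open set $U$ constraining only finitely many coordinates, constructs by hand (finite-dimensional slides/stretches in the spirit of Lemma \ref{Lem:man}, damped near $t=1$) a homeomorphism $h_1\in\mathcal{H}_{c,0}([0,1)\times Q)$ with $h_1(\{0\}\times Q)\subset U$, extracts by compactness of the face a $\delta>0$ with $h_1([0,\delta)\times Q)\subset U$, swallows $K$ into $h_2([0,\delta)\times Q)$ by an easy one-dimensional stretch $h_2$ in the $[0,1)$-coordinate, and takes $h=h_2\circ h_1^{-1}$. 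Your route instead collapses the face $\{0\}\times Q$ to an apex---which requires the nontrivial facts that quotients by $Z$-sets of trivial shape return the same $Q$-manifold and that $Z$-set unknotting identifies the open cone with the chart---and then moves a point of $V$ to the apex by an ambient isotopy fixing the outer face. The two mechanisms are dual: ``every neighborhood of the apex contains a slab $\{t<\delta\}$'' is exactly the paper's compactness step, and moving $V$ over an apex neighborhood is the inverse of squeezing the face into $V$, an inversion the paper makes explicit through $h_2\circ h_1^{-1}$. What your approach buys is conceptual transparency and no normalization of $V$; what it costs is the full Chapman machinery ($Z$-set collapsing, homogeneity rel a face with support control), where the paper's key observation---a basic open set leaves infinitely many coordinates free, so the infinite-dimensional face can be squeezed into it by finitely many elementary two-coordinate moves---keeps everything explicit, and in particular your heuristic that ``genuine infinite-dimensionality is unavoidable'' in the engulfing step is softened by the paper's argument. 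Two small points to patch in your version: choose $\eta$ \emph{after} $V$ is fixed (so that $V$ meets $\{t<1-\eta\}$ while still $K\subset\{t\le 1-\eta\}$), and verify that the isotopy moving $v$ to the apex is supported in the open subcone, so that the extension by the identity to $M$ is continuous and the isotopy exhibits membership in $\mathcal{H}_0(M)$.
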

\begin{proof}
We first show the following claim.
For any open subset $U$ of $[0, 1)\times [0, 1]^n$ of the form
$(a, b)^n\times [0, 1]$ ($0<a<b < 1$)
and for any compact subset $K$ of $[0, 1)\times [0, 1]^n$,
there is a homeomorphism
$h\in \mathcal{H}_{c, 0}([0, 1) \times [0, 1]^n)$ satisfying
$K \subset h(U)$.
Here, for a locally compact metrizable space $Y$, $\mathcal{H}_{c, 0}(Y)$ denotes the subgroup of homeomorphisms on $Y$ defined as follows.
First we define $\mathcal{H}_{c}(Y)$ to be the group of homeomorphisms on $Y$ which coincide with the identity off a compact subset.
Then we identify $\mathcal{H}_{c}(Y)$ with the inductive limit of subgroups of homeomorphism groups of compact subsets of $Y$ in the natural way.
Then we topologize $\mathcal{H}_{c}(Y)$ with the inductive topology.
Now we define $\mathcal{H}_{c, 0}(Y)$ to be the path-connected component of $\mathcal{H}_{c}(Y)$ containing the identity
with respect to this topology.
To show the claim, we first construct a homeomorphism $h_1 \in \mathcal{H}_{c, 0}([0, 1)\times [0, 1]^n)$
satisfying $h_1(\{0 \}\times [0, 1]^n) \subset (a, b)^n\times [0, 1]$ in a similar way to the proof of Lemma \ref{Lem:man}.
Then, since $h_1$ is a homeomorphism,
there is a positive number $\delta>0$ satisfying
$h_1([0, \delta)\times [0, 1]^n )\subset (a, b)^n\times [0, 1].$
On the one hand, it is easy to find $h_2 \in \mathcal{H}_{c, 0}([0, 1)\times [0, 1]^n)$
satisfying
$K \subset h_2([0, \delta)\times [0, 1]^n).$
Now the homeomorphism $h:=h_2\circ h_1^{-1}$ satisfies the required condition.

Next we observe that for any compact metrizable space $X$ and its open subset $U$,
any $h\in \mathcal{H}_{c, 0}(U)$ extends to a homeomorphism $\tilde{h}$ in $\mathcal{H}_0(X)$
by defining $\tilde{h}(x)=x$ off $U$.
Now thanks to the claim in the previous paragraph with this observation,
the rest of the proof can be completed by a similar way to that of Lemma \ref{Lem:man}.
\end{proof}

We next show that the finite filling property gives a sufficient condition for the pure infiniteness of the reduced groupoid \Cs -algebra.
Recall from \cite{Mat} that an \'etale groupoid $G$ is said to be essentially principal if
the interior of the set
$\{g\in G: r(g)=s(g)\}$ coincides with $G^{(0)}$.
Note that for transformation groupoids, this condition is equivalent to the topological freeness
of the original dynamical system.
\begin{Prop}\label{Prop:pi}
Let $G$ be an \'etale groupoid with the finite filling property.
Assume further that $G$ is essentially principal.
Then the reduced groupoid \Cs -algebra ${\rm C}^\ast_{\rm r}(G)$ is purely infinite and simple.
In particular, if $G$ is additionally assumed to be second countable and amenable,
then ${\rm C}^\ast_{\rm r}(G)$ is a Kirchberg algebra in the UCT class.
\end{Prop}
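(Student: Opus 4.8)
The plan is to reduce both conclusions to a single Cuntz-comparison statement: for every nonzero positive $a\in{\rm C}^\ast_{\rm r}(G)$ one has $1\precsim a$, in the sense that there are elements $r_k$ with $r_k^\ast a r_k\to 1$. This already gives the definition of purely infinite and simple, because for large $k$ the element $c:=r_k^\ast a r_k$ is invertible and then $b:=r_k c^{-1/2}$ satisfies $b^\ast a b=1$; moreover $1\precsim a$ forces $1$ into the closed two-sided ideal generated by $a$, so that no nonzero positive element lies in a proper ideal and ${\rm C}^\ast_{\rm r}(G)$ is simple. Note that ${\rm C}^\ast_{\rm r}(G)$ is unital, with unit $1\in C(G^{(0)})$, since $G^{(0)}$ is compact.

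Let $E\colon{\rm C}^\ast_{\rm r}(G)\to C(G^{(0)})$ denote the canonical conditional expectation, which is faithful. The first step is to localise $a$ onto the unit space, and this is where essential principality enters. Following the method of \cite{JR} and \cite{LaS}, I would establish a Kishimoto-type suppression lemma: after approximating $a$ in norm by a function supported on finitely many open $G$-sets, essential principality ensures that the range and source maps of the off-diagonal $G$-sets disagree on a dense set, so that compressing by a bump function $d\in C(G^{(0)})$ concentrated near a point where $E(a)$ is large annihilates the off-diagonal part up to a prescribed error. This produces a nonzero positive $f\in C(G^{(0)})$, supported in an arbitrarily small open set and bounded below by some $\delta>0$ on a nonempty open subset $W'$, with $f\precsim a$.

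The second step uses the finite filling property to dominate the unit by $f$, and this is the step I expect to be the main obstacle. The model case is $1$-filling: then $W'$ is filled by a single open $G$-set, and if $G^{(0)}$ is totally disconnected one may take it clopen, obtaining a clopen $G$-set $V$ with $r(V)=G^{(0)}$ and $s(V)\subset W'$; the element $g:=\mathbb{1}_V$ is a co-isometry with $gg^\ast=1$ and $g^\ast g=\mathbb{1}_{s(V)}\in C(G^{(0)})$, so that $1\sim g^\ast g\precsim f\precsim a$ at once. In general the $n$-filling property applied to $W'$ yields open $G$-sets $U_1,\dots,U_n$ with $\bigcup_{i=1}^n r(U_iW')=G^{(0)}$; choosing $g_i\in C_c(U_iW')$ so that $\{g_ig_i^\ast\}$ is a partition of unity on $G^{(0)}$ and setting $b:=\sum_i g_i$, the source data $g_i^\ast g_i$ are supported in $W'$, while the diagonal ($i=j$) terms of $bb^\ast$ sum to $\sum_i g_ig_i^\ast=1$. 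The difficulty is that the $g_i$ are only partial isometries, so $bb^\ast$ carries off-diagonal convolution terms; the plan is to suppress these by a second application of the Kishimoto-type lemma, replacing $bb^\ast$ by its diagonal up to Cuntz comparison and concluding $1\precsim bb^\ast\precsim f\precsim a$. Controlling these off-diagonal terms through essential principality is the technical heart of the argument.

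Finally, the ``in particular'' clause is formal. Second countability of $G$ makes ${\rm C}^\ast_{\rm r}(G)$ separable. If $G$ is amenable, then ${\rm C}^\ast_{\rm r}(G)={\rm C}^\ast(G)$ is nuclear (see \cite{BO}), and by Tu's theorem every amenable \'etale groupoid satisfies the UCT. Hence ${\rm C}^\ast_{\rm r}(G)$ is simple, separable, nuclear, and purely infinite, so it is a Kirchberg algebra, and it lies in the UCT class.
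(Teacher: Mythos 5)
Your overall architecture agrees with the paper's (localize a general positive $a$ onto the unit space via essential principality, then use the filling property to dominate the unit), and your reduction to $1\precsim a$, your first localization step, and the final ``in particular'' paragraph are all fine. But the step you yourself flag as ``the technical heart'' --- suppressing the off-diagonal terms of $bb^\ast$ --- is a genuine gap, and the second application of the Kishimoto-type lemma you propose cannot close it. That lemma, given a positive $x$, produces a bump function $d\in C(G^{(0)})$ with $d x d\approx d E(x) d$ and $\|dxd\|$ close to $\|E(x)\|$: the price of killing the off-diagonal part is compression to a small neighbourhood, so its output is a \emph{localized} positive function Cuntz-below $x$, never $1\precsim x$. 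Applied to $x=bb^\ast$ it merely returns you to the situation you started from (a small positive function dominated below $a$), so the argument is circular; the global covering information $\sum_i g_ig_i^\ast=1$ is destroyed by the compression. There is also a subsidiary error: for $i\neq j$ the term $g_ig_j^\ast$ need not have zero expectation, since the $G$-sets $U_iW'$ and $U_jW'$ have their sources inside the \emph{common} set $W'$ and may intersect, so $(U_iW')(U_jW')^{-1}$ can meet $G^{(0)}$; hence identifying ``the diagonal of $bb^\ast$'' with $\sum_i g_ig_i^\ast=1$ is already incorrect.

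The paper's Lemma \ref{Lem:pi} avoids all of this by making the off-diagonal terms vanish \emph{identically}, not approximately: the sources are disjointified \emph{before} the filling property is invoked. Since $G$ is minimal and $G^{(0)}$ is infinite, $G^{(0)}$ has no isolated points, so inside $U:=\{x: b(x)>1-\epsilon\}$ one chooses $n$ pairwise disjoint non-empty open sets $U_1,\ldots,U_n$; minimality provides open $G$-sets $V_i$ with $\bigcap_i r(V_iU_i)\neq\emptyset$, the $n$-filling property is applied to this common open set, and composing yields open $G$-sets $W_1,\ldots,W_n$ with $\bigcup_i r(W_iU_i)=G^{(0)}$ and, after replacing $W_i$ by $W_iU_i$, with $s(W_i)\subset U_i$ pairwise disjoint. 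Taking $f_i\in C_c(G)$ supported in $W_i$ and $\equiv 1$ on compact $G$-sets whose ranges still cover $G^{(0)}$, and setting $c:=\sum_i f_i^\ast$, disjointness of the sources forces $f_i\ast b\ast f_j^\ast=0$ for all $i\neq j$, whence $c^\ast b c\in C(G^{(0)})$, $c^\ast bc\geq 1-\epsilon$, and $\|c\|\leq\sqrt{n}$. This is the idea your proposal is missing: apply the filling property not to $W'$ itself but over a family of disjoint shrunken copies of it, so the off-diagonal problem never arises; the uniform bound $\|c\|\leq\sqrt{n}$ then makes the standard approximation argument from $C_c(G)$ to a general positive element go through, and essential principality is needed only once, in the initial localization.
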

We note that the last statement immediately follows from the first one since
the reduced groupoid \Cs -algebra of an amenable \'etale groupoid is nuclear (see Theorem 5.6.18 of \cite{BO}) and is in the UCT class \cite{Tu}.
To show the main statement, we need the following lemma, which is the analogue of Lemma 1.5 of \cite{JR}.
\begin{Lem}\label{Lem:pi}
Let $G$ be an \'etale groupoid with the $n$-filling property.
Let $b$ be a positive element in $C(G^{(0)})$ with norm one.
Then for any $\epsilon>0$, there is $c\in {\rm C}^\ast_{\rm r}(G)$ such that
$\|c \|\leq \sqrt{n}$ and $c^\ast b c\geq 1-\epsilon$.
\end{Lem}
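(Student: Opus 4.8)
The plan is to adapt the proof of Lemma~1.5 of \cite{JR} from the group setting to the $G$-set calculus. I would first set $W:=\{x\in G^{(0)}:b(x)>1-\epsilon\}$, which is a non-empty open subset of $G^{(0)}$ since $b\ge 0$ has norm one. Feeding $W$ into the $n$-filling property produces open $G$-sets $U_1,\dots,U_n$ with $\bigcup_{i=1}^n r(U_iW)=G^{(0)}$; writing $P_i:=(U_iW)^{-1}$, each $P_i$ is an open $G$-set whose range $r(P_i)=s(U_iW)$ is contained in $W$ and whose sources cover, $\bigcup_i s(P_i)=G^{(0)}$. I would then fix a partition of unity $\psi_1,\dots,\psi_n\in C(G^{(0)})$ with ${\rm supp}(\psi_i)\subseteq s(P_i)$ and $\sum_i\psi_i=1$, define $a_i:=\psi_i^{1/2}\circ s\in C_c(G)$ (supported on $P_i$), and set $c:=\sum_{i=1}^n a_i$. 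A short computation in the $G$-set calculus gives $a_i^\ast a_i=\psi_i$, so $\sum_i a_i^\ast a_i=1$, and the norm bound follows from the operator inequality $(\sum_i a_i)^\ast(\sum_i a_i)\le n\sum_i a_i^\ast a_i$, which is a rephrasing of $\sum_{i<j}(a_i-a_j)^\ast(a_i-a_j)\ge 0$; thus $c^\ast c\le n$ and $\|c\|\le\sqrt n$.

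The key structural point I would isolate is the behaviour of $c^\ast bc=\sum_{i,j}a_i^\ast ba_j$. Because $r(P_i)\subseteq W$, the diagonal terms satisfy $a_i^\ast ba_i=\psi_i\cdot\bigl(b\circ r\circ(s|_{P_i})^{-1}\bigr)\ge(1-\epsilon)\psi_i$ in $C(G^{(0)})$, whence $\sum_i a_i^\ast ba_i\ge(1-\epsilon)1$. Moreover $a_i^\ast ba_j$ is supported on the $G$-set $(U_iW)(U_jW)^{-1}$, whose meeting with $\{b\ne 0\}$ is governed by $r(P_i)\cap r(P_j)$; hence the off-diagonal terms vanish precisely when the ranges $r(P_i)\subseteq W$ are pairwise disjoint. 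In that favourable case $c$ is even an isometry and $c^\ast bc=\sum_i a_i^\ast ba_i\ge(1-\epsilon)1$ exactly, so the entire statement reduces to producing the $P_i$ with disjoint ranges inside $W$.

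The hard part, which I expect to be the main obstacle, is to pass from this control of the diagonal to the genuine operator inequality $c^\ast bc\ge(1-\epsilon)1$. The $n$-filling property yields a covering of $G^{(0)}$ by the sources of the $P_i$, but it does not by itself let one confine the ranges $r(P_i)$ to disjoint sub-regions of $W$: doing so would amount to a paradoxicality of $W$, i.e.\ pure infiniteness, which is strictly stronger than finite filling. Consequently the ranges overlap, the cross terms $a_i^\ast ba_j$ survive, and---as elementary finite-dimensional examples show---control of the diagonal alone does \emph{not} force $c^\ast bc\ge(1-\epsilon)1$. Overcoming this is exactly the content of \cite{JR}, whose handling of the cross terms I would transcribe into the $G$-set language. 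The feature that rescues the inequality is that $G^{(0)}$, and hence $W$, has no isolated points---automatic here, since finite filling forces $G^{(0)}$ to be infinite and perfect---so that $W$ is large enough to absorb the overlaps; the crude multiplicity bound $n$ is what produces the factor $\sqrt n$, while the finer positivity estimate on the off-diagonal part is where the real work, and the main difficulty, resides.
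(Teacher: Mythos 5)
Your setup (the diagonal terms and the norm bound via $c^\ast c\le n\sum_i a_i^\ast a_i$) is fine, but the proof has a genuine gap at exactly the point you flag, and your diagnosis of that point is wrong. You never actually handle the off-diagonal terms --- you only promise to ``transcribe'' their handling from \cite{JR} --- and your structural claim about why they cannot be killed is false. You assert that confining the transported sets to pairwise disjoint sub-regions of $W=\{b>1-\epsilon\}$ ``would amount to a paradoxicality of $W$, i.e.\ pure infiniteness.'' It would not: paradoxicality requires compressing two disjoint copies of a set \emph{inside itself}, whereas what is needed here is only that $n$ pairwise disjoint pieces of $W$ be \emph{expanded} so that their images cover $G^{(0)}$, and that follows from the $n$-filling property combined with minimality (itself a consequence of $n$-filling). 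This disjointification is the entire content of the paper's proof (and of Lemma 1.5 of \cite{JR}, which does not estimate surviving cross terms either): since $G^{(0)}$ has no isolated points, choose pairwise disjoint non-empty open sets $U_1,\ldots,U_n\subset W$; by minimality there are open $G$-sets $V_1,\ldots,V_n$ such that $\bigcap_i r(V_iU_i)$ is non-empty; applying the $n$-filling property to a non-empty open subset $W'$ of this intersection yields open $G$-sets $T_1,\ldots,T_n$ with $\bigcup_i r(T_iW')=G^{(0)}$, and then $W_i:=T_iV_iU_i$ are open $G$-sets with $\bigcup_i r(W_i)=G^{(0)}$ and \emph{pairwise disjoint sources} $s(W_i)\subset U_i$.

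With this in hand there is nothing left to estimate. Take $f_i\in C_c(G)$ with $0\le f_i\le 1$, ${\rm supp}(f_i)\subset W_i$, and $f_i\equiv 1$ on compact $G$-sets $Z_i\subset W_i$ chosen (by a shrinking/compactness argument) so that $\bigcup_i r(Z_i)=G^{(0)}$, and set $c:=\sum_i f_i^\ast$. Every off-diagonal product $f_i\ast f_j^\ast$ and $f_i\ast b\ast f_j^\ast$ with $i\ne j$ vanishes \emph{identically}, since composability would force $s(W_i)\cap s(W_j)\ne\emptyset$. Hence $c^\ast c=\sum_i f_i\ast f_i^\ast\le n$ in $C(G^{(0)})$, giving $\|c\|\le\sqrt{n}$ --- the factor $\sqrt n$ comes from the overlapping \emph{ranges}, not from any multiplicity bound against surviving cross terms --- and $c^\ast bc=\sum_i f_i\ast b\ast f_i^\ast\ge 1-\epsilon$ pointwise, because each summand is a positive function, the sources lie in $W$ where $b>1-\epsilon$, and at every point of $G^{(0)}$ some $r(Z_i)$ contributes a full summand. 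In your orientation this is precisely the ``favourable case'' you declared unattainable: setting $P_i:=W_i^{-1}$ gives $r(P_i)\subset U_i$ pairwise disjoint and $\bigcup_i s(P_i)=G^{(0)}$, so your own reduction would then finish the proof. The missing idea is thus the pre-composition by minimality transporters $V_i$; without it your argument is incomplete, and the claim that only pure infiniteness could remove the overlaps misreads both the lemma and its model in \cite{JR}.
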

\begin{proof}
Set $U:=\{x\in G^{(0)}: b(x)>1-\epsilon\}$.
Take $n$ mutually disjoint non-empty open subsets $U_1, \ldots, U_n$ of $U$.
Since $G$ is minimal, there are $n$ open $G$-sets $V_1, \ldots, V_n$
with the property that the intersection $\bigcap_i r(V_i U_i)$ is non-empty.
Using the $n$-filling property of $G$ with this observation, we can find
$n$ open $G$-sets $W_1, \ldots, W_n$ satisfying
$$\bigcup_{i=1}^n r(W_i U_i)=G^{(0)}.$$
By replacing $W_i$ by $W_i U_i$,
we may assume $s(W_i)\subset U_i$.
Since $G$ is locally compact and $G^{(0)}$ is compact,
replacing each $W_i$ by a smaller one if necessary,
we may assume further that each $W_i$ is relatively compact in $G$.
Since $G$ is locally compact, for each $i$, it is not hard to find
an increasing net $(W_{i, \lambda})_{\lambda \in \Lambda}$ of open subsets of $W_i$
that satisfies the following conditions.
The closure of $W_{i, \lambda}$ in $G$ is contained in $W_i$ for each $\lambda$,
and the union $\bigcup_\lambda W_{i, \lambda}$ is equal to $W_i$.
Since the unit space $G^{(0)}$ is compact,
there is $\lambda \in \Lambda$ satisfying
$\bigcup_{i=1}^n r(W_{i, \lambda})= G^{(0)}$.
Now fix such $\lambda$ and put $Z_i:= {\rm cl}(W_{i, \lambda})$ for each $i$.
Then, by the choice of $W_{i, \lambda}$, the $Z_i$ is a compact $G$-set.
Moreover we have
$$ G^{(0)} = \bigcup_{i=1}^n r(W_{i, \lambda})\subset \bigcup_{i=1}^n r(Z_i).$$
Now for each $i$, take a continuous function $f_i \in C_c(G)$ satisfying the following conditions.
\begin{enumerate}
\item $0 \leq f_i \leq 1.$
\item ${\rm supp}(f_i)\subset W_i$.
\item $f_i\equiv 1$ on $Z_i$.
\end{enumerate}
(Since $Z_i$ and the closure of $W_i$ in $G$ are compact, such function exists.)
Since $W_i$ is a $G$-set,
these conditions imply that
$f_i \ast f_i^\ast \in C(G^{(0)})$ and that
$f_i \ast f_i^\ast \leq 1$.
Since the sets $s(W_1), \ldots, s(W_n)$ are mutually disjoint,
we have $f_i \ast f_j^\ast =0$ for two distinct $i$ and $j$.
Now put $c:=\sum_{i=1}^n f_i^\ast$.
The above observations show that
$c^\ast \ast c\in C(G^{(0)})$ and that $c^\ast \ast c \leq n$.
Thus $\|c\| \leq \sqrt{n}$.
Since the $G$-sets $W_1, \ldots, W_n$ have mutually disjoint sources,
we also get
$c^\ast \ast b \ast c\in C(G^{(0)})$.
Since $s(W_i)\subset U$ for each $i$ and $\bigcup_{i=1}^n r(Z_i)=G^{(0)}$, we further obtain
$c^\ast \ast b \ast c\geq 1-\epsilon.$
\end{proof}
\begin{proof}[Proof of Proposition \ref{Prop:pi}]
The rest of the proof is basically the same as that in \cite{JR}.
We first observe that since $G$ is essentially principal, it is not hard to show that
for any $b\in C_c(G)$ and $\epsilon>0$,
there is a positive element $y\in C(G^{(0)})$ with norm one satisfying
$yby=yE(b)y$ and $\|yby \|> \|E(b)\|-\epsilon$, where $E$ denotes the restriction map $C_c(G)\rightarrow C(G^{(0)})$.
Note that the map $E$ extends to a faithful conditional expectation on ${\rm C}^\ast_{\rm r}(G)$.
From this with Lemma \ref{Lem:pi}, for any positive element $b\in C_c(G)$ with $\| E(b)\|=1$,
there is an element $c\in C_c(G^{(0)})$ satisfying
$\|c\|\leq \sqrt{n}$ and $c^{\ast} b c\geq 1/2.$
Since the norm of $c$ is bounded by the fixed constant $\sqrt{n}$,
now a standard argument completes the proof.
\end{proof}

\subsection{Minimal skew products with purely infinite crossed products}
Now using the finite and weak finite filling property,
we construct minimal skew products
whose crossed products are purely infinite.

\begin{Prop}\label{Prop:filling}
Let $\alpha\colon \Gamma \curvearrowright Z$ be an amenable topologically free dynamical system with the weak $n$-filling property.
Let $\mathcal{G}\curvearrowright Y$ be a minimal dynamical system of a path connected group $\mathcal{G}$ with the $m$-filling property.
Then the set
$$\left\{ \beta\in \overline{\mathcal{S}}_{\mathcal{G}}(\alpha): \beta {\rm \ has\ the\ weak\ }(nm){\rm \mathchar`-filling\ property}\right\}$$
is a $G_\delta$-dense subset of $\overline{\mathcal{S}}_{\mathcal{G}}(\alpha).$
\end{Prop}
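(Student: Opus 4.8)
The plan is to follow the strategy of the proof of Theorem \ref{Thm:min}, replacing the single open set $U$ by a family of "filling witnesses" and tracking how the $m$-filling of $\mathcal{G}\curvearrowright Y$ and the weak $n$-filling of $\alpha$ multiply. Fix a countable basis of $X=Z\times Y$ consisting of product sets $W=W_Z\times W_Y$ with $W_Z\subseteq Z$ and $W_Y\subseteq Y$ non-empty open. For such $W$ set
\[
\mathcal{E}_W:=\{\beta\in\overline{\mathcal{S}}_{\mathcal{G}}(\alpha):\text{there are $nm$ open $G$-sets of $X\rtimes_\beta\Gamma$ with source in $W$ whose ranges cover $X$}\}.
\]
Since $r(U\cap s^{-1}(W))=r(UW)$, a system $\beta$ lies in $\mathcal{E}_W$ precisely when the weak $nm$-filling condition holds for the open set $W$. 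Because filling a subset forces filling of any larger open set (the relevant sources only shrink), $\beta$ has the weak $nm$-filling property if and only if $\beta\in\mathcal{E}_W$ for every basic product $W$; hence the set in the statement equals $\bigcap_W\mathcal{E}_W$, and by the Baire category theorem it suffices to show that each $\mathcal{E}_W$ is open and dense.

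For openness I would first reduce each witnessing $G$-set to finitely many group elements: writing an open $G$-set $U$ of $X\rtimes_{\beta_0}\Gamma$ as the disjoint union over $\gamma\in\Gamma$ of graphs $\{(\beta_{0,\gamma}(x),\gamma,x):x\in O^\gamma\}$, compactness of $X$ lets me discard all but finitely many $\gamma$ while retaining $\bigcup_l r(U_l)=X$. I then shrink each domain $O^\gamma$ to a relatively compact open subset, twice: once so that within each $G$-set the finitely many closures $\beta_{0,\gamma}({\rm cl}(O^\gamma))$ stay pairwise disjoint, and once more so that the resulting open images still cover $X$ with compact cores. For $\beta$ close to $\beta_0$ (uniformly on the finite set of group elements involved), the same combinatorial data $\{(\beta_\gamma(x),\gamma,x):x\in O^\gamma\}$ then defines genuine $G$-sets of $X\rtimes_\beta\Gamma$ — disjoint closures remain disjoint and the robust cover is preserved — with sources still in $W$ and ranges still covering $X$. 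Thus $\mathcal{E}_W$ is open. This perturbation bookkeeping, though routine, is the most technical point of the argument.

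For density I would reuse the conjugation reduction of Theorem \ref{Thm:min}. Since weak $nm$-filling is a conjugacy invariant transported by $H\in\mathcal{G}_s$, one checks $H\mathcal{E}_WH^{-1}=\mathcal{E}_{H(W)}$, so $H^{-1}\circ\bar\alpha\circ H\in{\rm cl}(\mathcal{E}_W)$ is equivalent to $\bar\alpha\in{\rm cl}(\mathcal{E}_{H(W)})$. As $\mathcal{S}_{\mathcal{G}}(\alpha)$ is dense, it therefore suffices to prove $\bar\alpha\in{\rm cl}(\mathcal{E}_\Omega)$ for every non-empty open $\Omega\subseteq X$; shrinking $\Omega$ to a product $W_Z\times W_Y\subseteq\Omega$ and using $\mathcal{E}_{W_Z\times W_Y}\subseteq\mathcal{E}_\Omega$ reduces this to product sets. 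Given a finite $S\subseteq\Gamma$ and $\epsilon>0$, I run the construction of Theorem \ref{Thm:min}: using the $m$-filling of $Y$ I choose $\tilde h_1,\dots,\tilde h_m\in\mathcal{G}$ with $\bigcup_j\tilde h_j(W_Y)=Y$, take a path $h$ in $\mathcal{G}$ through them at points $t_1,\dots,t_m$, build $\theta\colon Z\to[0,1]$ surjective on a Cantor set $K\subseteq W_Z$, and set $g_z:=h_{\theta(z)}$, so that $G(z,y):=(z,g_z(y))$ defines $G\in\mathcal{G}_s$ with $d(\bar\alpha_s,G^{-1}\circ\bar\alpha_s\circ G)<\epsilon$ for all $s\in S$.

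It remains to verify $G^{-1}\circ\bar\alpha\circ G\in\mathcal{E}_W$, equivalently that $\bar\alpha$ has the weak $nm$-filling property at $\Omega:=G(W)=\{(z,y):z\in W_Z,\ g_z^{-1}(y)\in W_Y\}$; here the two fillings decouple because $\bar\alpha$ acts trivially on $Y$. By the shrinking lemma I replace $\{\tilde h_j(W_Y)\}_j$ by open sets $Y_j'$ with ${\rm cl}(Y_j')\subseteq\tilde h_j(W_Y)$ and $\bigcup_jY_j'=Y$; choosing the modulus of continuity of $h$ and $\theta$ suitably, the open set $O^{(j)}:=W_Z\cap\theta^{-1}((t_j-\rho,t_j+\rho))$ is non-empty (it meets $K$, where $\theta$ takes the value $t_j$) and $g_z$ is so close to $\tilde h_j$ on it that $g_z(W_Y)\supseteq Y_j'$, whence $O^{(j)}\times Y_j'\subseteq\Omega$. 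Applying the weak $n$-filling of $\alpha$ at each $O^{(j)}$ gives open $G$-sets $P^{(j)}_1,\dots,P^{(j)}_n$ of $Z\rtimes_\alpha\Gamma$ with sources in $O^{(j)}$ and $\bigcup_i r(P^{(j)}_i)=Z$. The $nm$ sets $P^{(j)}_i\times Y_j'$, viewed as $G$-sets of $X\rtimes_{\bar\alpha}\Gamma$ that carry $Y_j'$ along trivially, have sources in $O^{(j)}\times Y_j'\subseteq\Omega$ and satisfy $\bigcup_{i,j}r(P^{(j)}_i\times Y_j')=\bigcup_j(Z\times Y_j')=X$. Hence $\bar\alpha$ weak $nm$-fills at $\Omega$, so $\beta:=G^{-1}\circ\bar\alpha\circ G\in\mathcal{E}_W$ while $d(\bar\alpha_s,\beta_s)<\epsilon$ on $S$; letting $S$ and $\epsilon$ vary yields $\bar\alpha\in{\rm cl}(\mathcal{E}_W)$ and thus density. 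Beyond the openness bookkeeping, the main obstacle is exactly this step: arranging, through the continuity of $\theta$ and $h$, the product tubes $O^{(j)}\times Y_j'$ inside $G(W)$ that allow the $Z$- and $Y$-fillings to combine into precisely $nm$ sets.
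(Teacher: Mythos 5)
Your proposal is correct and follows essentially the same route as the paper's proof: Baire category applied to your sets $\mathcal{E}_W$ (the paper's $\mathcal{F}_U$), openness via reduction to finitely many group elements and a compact shrinking so that fiberwise disjointness of ranges and the covering survive small perturbations (the paper's $\mathcal{N}_\delta$/$\mathcal{I}_\delta$ bookkeeping), and density via the conjugation reduction together with the construction from Theorem \ref{Thm:min}, which yields $m$ product tubes inside $H(U)$ to which the weak $n$-filling of $\alpha$ is applied. The only cosmetic difference is that you realize the $nm$ witnesses as products $P^{(j)}_i\times Y_j'$ carrying a shrunken open cover of $Y$, whereas the paper pulls the $G_\alpha$-sets back through the quotient map $G_{\bar{\alpha}}\rightarrow G_\alpha$ and cuts by the tubes $U_i\times V_i$ --- the same sets up to trivial modification.
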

\begin{proof}
For an open set $U$ of $X=Z\times Y$,
let $\mathcal{F}_U$ denote the set of elements $\beta$ of $\overline{\mathcal{S}}_{\mathcal{G}}(\alpha)$ satisfying the following condition.
There are $nm$ open $G_ \beta$-sets $V_1, \ldots, V_{nm}$ with
$\bigcup_i r(V_iU)= X$.
Here $G_\beta$ denotes the transformation groupoid $X\rtimes_\beta \Gamma$ of $\beta$.
Then for a countable basis $(U_n)_n$ of $X$,
the set in the question coincides with the intersection $\bigcap_n \mathcal{F}_{U_n}$.
Hence it suffices to show that
each $\mathcal{F}_U$ is open and dense in $\overline{\mathcal{S}}_{\mathcal{G}}(\alpha)$.

We first show the openness of $\mathcal{F}_U$.
Let $\beta \in \mathcal{F}_U$.
Let $V_1, \ldots, V_{nm}$ be open $G_\beta$-sets as above.
Replacing $V_i$'s by smaller ones, we may assume that they are relatively compact in $G_\beta$
and that the sources $s(V_i)$ are contained in $U$.
Set $F:=\pi(\bigcup_i V_i)$, where $\pi\colon X\rtimes_\beta \Gamma \rightarrow \Gamma$
denotes the projection onto the second coordinate.
Since each $V_i$ is relatively compact in $G_\beta$, the set $F$ is a finite subset of $\Gamma$.
Now we apply the argument in the proof of Lemma \ref{Lem:pi} to $(V_i)_i$ to choose compact $G_\beta$-sets $W_1, \ldots, W_{nm}$ with the following properties.
The $W_i$ is contained in $V_i$ for each $i$
and the union $\bigcup_i r({\rm int}(W_i))$ is equal to $X$.
Now for a $G_\beta$-set $W$ and $g\in \Gamma$,
define the subset $W_g\subset X$ to be
$r(W\cap \pi^{-1}(\{g\})).$
Then, for each $i$, the sets $(W_{i, g})_{g\in F}$ are mutually disjoint compact sets in $X$.
Moreover, the union $\bigcup_{i, g}{\rm int}(W_{i, g})$ is equal to $X$.

For $W \subset X$ and $\delta >0$, we define the (open) subsets $\mathcal{N}_\delta(W)$ and $\mathcal{I}_\delta(W)$ of $X$ as follows.
$$\mathcal{N}_\delta(W) := \bigcup_{x\in W} B(x, \delta),$$
$$\mathcal{I}_\delta(W):=\{x\in X: {\rm there\ is\ }\eta>\delta {\rm \ with\ }B(x, \eta)\subset W\}.$$
Here for $x \in X$ and $\eta >0$, $B(x, \eta)$ denotes the open ball of center $x$ and radius $\eta$.
Then, from the properties of $W_i$'s and the compactness of $X$,
for a sufficiently small positive number $\delta >0$, the following conditions hold.
The sets $(\mathcal{N}_\delta(W_{i, g}))_g$ are mutually disjoint for each $i$
and the sets $(\mathcal{I}_\delta(W_{i, g}))_{i, g}$ cover $X$.
We fix such positive number $\delta$.
From the first condition, for any $\gamma \in \overline{\mathcal{S}}_{\mathcal{G}}(\alpha)$
satisfying $d(\gamma_s, \beta_s)< \delta$ for all $s\in F$,
each $W_i$ is a $G_\gamma$-set.
Here $W_i$ is regarded as a subset of $G_\gamma$
by identifying the transformation groupoids with the set $\Gamma \times X$ by ignoring the first coordinates.
Let $r_\beta$ and $r_\gamma$ denote the range map of $G_\beta$ and $G_\gamma$ respectively.
Then we have
$$\bigcup_i r_\gamma ({\rm int} (W_i))\supset \bigcup_i \mathcal{I}_\delta ( r_\beta ({\rm int} (W_i)))
= \bigcup_{i, g} \mathcal{I}_\delta(W_{i, g}) =X.$$
Therefore we have $\gamma \in \mathcal{F}_U$, which proves the openness of $\mathcal{F}_U$.

To show the density of $\mathcal{F}_U$, by the similar reason to that in the proof of Theorem \ref{Thm:min},
it suffices to show the following statement.
For any $\epsilon >0$ and any finite subset $S\subset \Gamma$,
there is a homeomorphism $H \in \mathcal{G}_s$ satisfying the following conditions.
\begin{enumerate}
\item
$d(\bar{\alpha}_s, H^{-1}\circ \bar{\alpha}_s \circ H)<\epsilon$ for $s\in S$.
\item
$H^{-1}\circ \bar{\alpha}\circ H\in \mathcal{F}_U$.
\end{enumerate}
Replacing $U$ by a smaller open set, we may assume $U=W\times V$
for some $W\subset Z$ and $V\subset Y$.
By the $m$-filling property of $\mathcal{G}\curvearrowright Y$,
we can choose $m$ elements $\tilde{h}_1, \ldots, \tilde{h}_m$ of $\mathcal{G}$ with
$\bigcup_i \tilde{h}_i(V)= Y$.
Now proceeding the same argument as in the proof of Theorem \ref{Thm:min},
we get a continuous map
$g\colon Z\rightarrow \mathcal{G}$
with the following conditions.
\begin{enumerate}
\item
$d(g_{s.z}^{-1}g_z, \id_Y)<\epsilon$ for all $z\in Z$ and $s\in S$.
\item
There are $m$ elements $w_1, \ldots, w_m$ in $W$ with the condition
$\bigcup_i {g_{w_i}}(V)=Y$.
\end{enumerate}
Let $H \in \mathcal{G}_s$ be the element corresponding to $g$.
Then from the first condition, we conclude
$d(\bar{\alpha}_s, H^{-1}\circ \bar{\alpha}_s \circ H)<\epsilon$ for $s\in S$.
To show $\beta := H^{-1}\circ \bar{\alpha} \circ H\in \mathcal{F}_U$, it suffices to show the following claim.
There are $nm$ open $G_{\bar{\alpha}}$-sets $W_1, \ldots, W_{nm}$ with
$\bigcup_i r(W_i H(U))=X.$
Indeed the sets
$$\{(H^{-1}(z), s, H^{-1}(w))\in X \times \Gamma \times X: (z, s, w)\in W_i\}\ (i=1, \ldots, nm)$$
then define the desired open $G_\beta$-sets.
To show the claim, first note that since $g$ is continuous,
there are an open subset $U_i$ of $U$ containing $w_i$ for $i=1, \ldots, m$
and an open covering $(V_i)_{i=1}^m$ of $Y$ satisfying
the following condition.
For any $z\in U_i$, we have $V_i \subset g_z(V)$.
From these conditions, we have
$H(U)\supset \bigcup_{i=1}^m(U_i \times V_i)$.
Now for each $1\leq i \leq m$,
take $n$ open $G_\alpha$-sets $W_{i, 1}, \ldots, W_{i, n}$ with
$\bigcup_{j=1}^n r(W_{i, j} U_i)=Z$.
For each $1\leq i \leq m$ and $1\leq j \leq n$, set $Z_{i, j}:=\varphi^{-1}(W_{i, j})$,
where $\varphi\colon G_{\bar{\alpha}}\rightarrow G_\alpha$ denotes the canonical quotient map.
Then each $Z_{i, j}$ is an open $G_{\bar{\alpha}}$-set
and we further get
$$\bigcup_{i, j} r_{\bar{\alpha}} (Z_{i, j} H(U))\supset \bigcup_{i, j} r_{\bar{\alpha}} (Z_{i, j} (U_i \times V_i))=\bigcup_{i, j} (r_\alpha(W_{i, j}U_i)\times V_i) =X.$$
\end{proof}

In \cite{RS}, R\o rdam and Sierakowski have shown
that every countable non-amenable exact group admits an amenable minimal free dynamical system
on the Cantor set whose crossed product is a Kirchberg algebra in the UCT class.
Proposition \ref{Prop:filling} particularly gives an extension of their result to more general spaces.
\begin{Thm}\label{Thm:RS}
Let $M$ be a connected closed topological manifold, a connected compact Hilbert cube manifold,
or a countable direct product of these manifolds.
Let $X$ be the Cantor set.
Then every countable non-amenable exact group admits
an amenable minimal free dynamical system on $M\times X$
whose crossed product is a Kirchberg algebra in the UCT class.
\end{Thm}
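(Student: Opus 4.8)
The plan is to assemble the machinery built up in the paper: Example~\ref{Exm:1fill} for the base, the three filling results for the fibre, Proposition~\ref{Prop:filling} to transport fillings to the skew product, and Proposition~\ref{Prop:pi} to pass to the crossed product. Fix a countable non-amenable exact group $\Gamma$. By Example~\ref{Exm:1fill} I would first choose an amenable minimal free purely infinite dynamical system $\alpha\colon \Gamma \curvearrowright X$ on the Cantor set $X$; since its unit space is zero-dimensional, Proposition~\ref{Prop:pifill} gives it the weak $1$-filling property, and freeness gives topological freeness. Setting the base to be this $X$ and the fibre to be $M$, so that the skew products live on $X\times M\cong M\times X$, it then remains to produce, for each admissible $M$, a path connected group $\mathcal{G}$ with a minimal action $\mathcal{G}\curvearrowright M$ having the $m$-filling property for some $m\in\mathbb{N}$. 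Proposition~\ref{Prop:filling}, applied with $n=1$, will then furnish a $G_\delta$-dense, hence (by Baire) non-empty, set of $\beta\in \overline{\mathcal{S}}_{\mathcal{G}}(\alpha)$ with the weak $m$-filling property.

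For $\mathcal{G}$ I would always take $\mathcal{H}_0(M)$, which is a path connected topological group by definition. If $M$ is a connected closed topological manifold, Lemma~\ref{Lem:man} gives the finite filling property of $\mathcal{H}_0(M)\curvearrowright M$; if $M$ is a connected compact Hilbert cube manifold, Lemma~\ref{Lem:Qman} does the same. For a countable direct product $M=\prod_k M_k$ I would split into two subcases. If only finitely many factors are nondegenerate, the product is, up to homeomorphism, a finite product and hence either a connected closed manifold or, when a Hilbert cube manifold factor occurs, a connected compact Hilbert cube manifold, so one of the two lemmas applies. If infinitely many factors are nondegenerate, I would invoke the fact from Hilbert cube manifold theory that a countable infinite product of nondegenerate compact connected ANRs---in particular of connected closed manifolds and connected compact Hilbert cube manifolds---is again a connected compact Hilbert cube manifold (see \cite{Ch}), so that Lemma~\ref{Lem:Qman} applies once more. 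In every case the finite filling property of $\mathcal{H}_0(M)\curvearrowright M$ forces its minimality, so all hypotheses of Proposition~\ref{Prop:filling} hold.

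Fixing such $\mathcal{G}$ and $m$, I would select a $\beta\in \overline{\mathcal{S}}_{\mathcal{G}}(\alpha)$ with the weak $m$-filling property and verify it has the required features. Since $\alpha$ is amenable, every element of $\overline{\mathcal{S}}_{\mathcal{G}}(\alpha)$ is amenable, so $\beta$ is amenable and its transformation groupoid $G_\beta=(M\times X)\rtimes_\beta \Gamma$ is an amenable, second countable \'etale groupoid. Because $\beta$ is a skew product over the base $\alpha$, the first coordinate of $\beta_s(x,y)$ is $\alpha_s(x)$, which for $s\neq e$ has no fixed point; hence $\beta$ is free, in particular topologically free, so $G_\beta$ is essentially principal. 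The weak $m$-filling property means exactly that $G_\beta$ has the $m$-filling property, which also yields the minimality of $\beta$. Proposition~\ref{Prop:pi} then shows that $C(M\times X)\rtimes_\beta \Gamma={\rm C}^\ast_{\rm r}(G_\beta)$ is purely infinite and simple, and, by amenability and second countability, a Kirchberg algebra in the UCT class.

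The verifications of freeness, essential principality, and amenability and second countability of $G_\beta$ are routine. The main obstacle I expect is the countable infinite product case: one must genuinely know that such a product is a Hilbert cube manifold before Lemma~\ref{Lem:Qman} can be applied. This is precisely where the homogeneity supplied by infinitely many independent directions enters, and it is this input from Hilbert cube manifold theory---rather than any property of $\Gamma$---that lets the fibre $M$ be taken so general.
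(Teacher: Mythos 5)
Your handling of the first two cases (a single connected closed manifold or a single connected compact Hilbert cube manifold) is exactly the paper's argument: Example~\ref{Exm:1fill} plus Proposition~\ref{Prop:pifill} gives the weak $1$-filling base on the Cantor set, Lemma~\ref{Lem:man} or Lemma~\ref{Lem:Qman} gives the $m$-filling fibre action of $\mathcal{H}_0(M)$, and Propositions~\ref{Prop:filling} and \ref{Prop:pi} finish. The gap is in the countably infinite product case: the fact you invoke there is false. A countably infinite product of nondegenerate compact connected ANRs need \emph{not} be a compact Hilbert cube manifold; the infinite torus $(S^1)^{\mathbb{N}}$ is a counterexample. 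Every Hilbert cube manifold is locally homeomorphic to an open subset of $Q$, and since points of $Q$ have arbitrarily small convex neighborhoods, Hilbert cube manifolds are locally contractible (indeed ANRs). But $(S^1)^{\mathbb{N}}$ is not locally contractible: any neighborhood $V$ of a point contains a basic box $W_1\times\cdots\times W_m\times\prod_{i>m}S^1$, and a loop winding once around a tail coordinate $i>m$ projects to a degree-one loop in that $S^1$ factor, so the inclusion of $V$ into any basic box neighborhood $U$ cannot be nullhomotopic. The correct product theorem in Hilbert cube manifold theory concerns ARs (an infinite product of nondegenerate compact ARs is homeomorphic to $Q$), and no closed manifold other than a point is an AR. So Lemma~\ref{Lem:Qman} is simply unavailable for, say, $M=(S^1)^{\mathbb{N}}$, and your argument does not cover the third case of the theorem.

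The paper closes this case by a tower-and-limit argument rather than by finding a filling action on the infinite product itself. Set $N_0:=X$, $\alpha_0:=\alpha$, and $N_n:=M_1\times\cdots\times M_n\times X$; apply Proposition~\ref{Prop:filling} inductively, with fibre the single manifold $M_{n+1}$ (so Lemma~\ref{Lem:man} or Lemma~\ref{Lem:Qman} applies at each stage), to get a minimal skew product extension $\alpha_{n+1}\colon\Gamma\curvearrowright N_{n+1}$ of $\alpha_n$ with the weak finite filling property; each $\alpha_n$ is free and amenable, since skew products over free bases are free and every element of $\overline{\mathcal{S}}_{\mathcal{G}}(\alpha_n)$ is amenable when $\alpha_n$ is. The desired system is the projective limit $\alpha_\infty=\varprojlim\alpha_n$ on $\prod_k M_k\times X$, whose crossed product is the increasing union of the crossed products of the $\alpha_n$. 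By your own argument each of those is a Kirchberg algebra in the UCT class, and simplicity, nuclearity, separability, the UCT, and pure infiniteness all pass to such increasing unions --- for pure infiniteness the paper cites Proposition 4.1.8 of \cite{Ror}. Note that this deliberately avoids asserting any filling property for the limit action itself; the permanence is carried out at the ${\rm C}^\ast$-level. Replacing your appeal to the false product fact with this inductive construction repairs the proof.
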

\begin{proof}
For the first two cases, the statement immediately follows from Example \ref{Exm:1fill}, Lemmas \ref{Lem:man} and \ref{Lem:Qman}, and Propositions \ref{Prop:pi} and \ref{Prop:filling}.

For the last case, let $M_1, M_2, \ldots$ be a sequence of spaces
each of which is either connected closed topological manifold or connected compact Hilbert cube manifold.
Set $N_n:= M_1 \times \cdots \times M_n \times X$ for each $n$.
We put $\alpha_0:=\alpha$ and $N_0:=X$ for convenience.
We inductively apply Proposition \ref{Prop:filling} to
$\alpha_n\colon \Gamma\curvearrowright N_n$
and $M_{n+1}$ to get a minimal skew product extension $\alpha_{n+1}\colon \Gamma\curvearrowright N_{n+1}$ of $\alpha_n$ with the weak finite filling property.
Then we get the projective system $(\alpha_n)_{n=1}^\infty$ of dynamical systems of $\Gamma$.
Since pure infiniteness of \Cs -algebras is preserved under taking increasing union (Prop 4.1.8 of \cite{Ror}),
the projective limit $\alpha_\infty:=\varprojlim \alpha_n$ possesses the desired properties.
\end{proof}
\section{Minimal dynamical systems of free groups on products of Cantor set and closed manifolds}\label{Sec:free}
In this section, we investigate the K-groups of the crossed products of minimal dynamical systems obtained in
Theorem \ref{Thm:min} for the free group case.
By using the Pimsner--Voiculescu six term exact sequence \cite{PV},
we give a K$\ddot{{\rm u}}$nneth-type formula for K-groups of their crossed products.
As an application, we give the following generalization of Theorems 4.10 and 4.22 of \cite{Suz}.

\begin{Thm}\label{Thm:man}
Let $\Gamma$ be a countable non-amenable virtually free group.
Let $M$ be either connected closed topological manifold or connected compact Hilbert cube manifold.
Then there exist continuously many amenable minimal free dynamical systems of $\Gamma$ on the product
of $M$ and the Cantor set whose crossed products
are mutually non-isomorphic Kirchberg algebras.
\end{Thm}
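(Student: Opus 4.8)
The plan is to produce the required family by applying the skew-product construction to a continuum of Cantor systems with pairwise distinct K-theory, and then to control the K-theory of the resulting crossed products by a K\"unneth-type computation. First note that a virtually free group is exact, so the constructions above are available. Since $\mathcal{G}=\mathcal{H}_0(M)$ is path connected and, by Lemma~\ref{Lem:man} or Lemma~\ref{Lem:Qman}, acts minimally on $M$ with the finite filling property, I would fix an amenable minimal free Cantor system $\alpha\colon\Gamma\curvearrowright C$ with the weak finite filling property (Example~\ref{Exm:1fill}) and apply Proposition~\ref{Prop:filling} together with Theorem~\ref{Thm:min}: intersecting the two $G_\delta$-dense sets, one selects $\beta\in\overline{\mathcal{S}}_{\mathcal{H}_0(M)}(\alpha)$ that is simultaneously minimal and weak finite filling. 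Such a $\beta$ is amenable (as $\alpha$ is) and free, since $\beta_g(z,y)=(\alpha_g z,\,c(g,z)y)$ can have a fixed point only where $\alpha_g$ does; by Proposition~\ref{Prop:pi} its crossed product $C(C\times M)\rtimes_\beta\Gamma$ is a Kirchberg algebra in the UCT class. By the Kirchberg--Phillips theorem \cite{Kir,Phi} these algebras are classified by the triple $(K_0,[1]_0,K_1)$, so it remains to realize a continuum of distinct such triples.

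The key K-theoretic observation is that the cocycle defining $\beta$ takes values in $\mathcal{H}_0(M)$, hence each of its values is homotopic to $\id_M$ and acts trivially on $K^*(M)$. Consequently the $\Gamma$-action induced on $K^*(C\times M)\cong C(C,\mathbb{Z})\otimes K^*(M)$ coincides with the one induced by the diagonal action $\bar{\alpha}$. Feeding this into the Pimsner--Voiculescu machinery for (virtually) free groups \cite{PV} --- that is, the K\"unneth-type formula of this section --- one finds that $(K_*,[1]_0)$ of $C(C\times M)\rtimes_\beta\Gamma$ agrees with that of $C(C\times M)\rtimes_{\bar{\alpha}}\Gamma=(C(C)\rtimes_\alpha\Gamma)\otimes C(M)$, and the K\"unneth theorem for \Cs -algebras then expresses it through $K_*(C(C)\rtimes_\alpha\Gamma)$ and $K^*(M)$. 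In particular the invariant of $\beta$ depends only on $\alpha$ and $M$, so to obtain distinct invariants I must vary $\alpha$.

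For the separation I would invoke \cite{Suz} to choose a continuum of amenable minimal free Cantor systems $(\alpha_t)_t$ for $\Gamma$, arranged so that the groups $K_*(C(C)\rtimes_{\alpha_t}\Gamma)$ are torsion free --- concretely with $K_1=0$ and $K_0=G_t$ ranging over continuum-many pairwise non-isomorphic rank-one subgroups of $\mathbb{Q}$. Torsion freeness annihilates the Tor term in the K\"unneth sequence, so for the associated $\beta_t$ one has $K_0(C(C\times M)\rtimes_{\beta_t}\Gamma)\cong G_t\otimes K^0(M)$. Writing $K^0(M)=\mathbb{Z}^{b_0}\oplus T_0$ with $T_0$ finite (a closed topological manifold, or a compact Hilbert cube manifold, has the homotopy type of a finite complex, so $K^*(M)$ is finitely generated) and with $b_0\ge 1$ because $M$ is connected, this group equals $G_t^{\,b_0}\oplus(G_t\otimes T_0)$, whose torsion subgroup is $G_t\otimes T_0$ and whose torsion-free quotient is $G_t^{\,b_0}$. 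By Baer's classification of completely decomposable groups, $G_t^{\,b_0}\cong G_s^{\,b_0}$ forces $G_t\cong G_s$; hence the $K_0$-groups, and a fortiori the classifying triples, are pairwise distinct.

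Putting these together yields a continuum of mutually non-isomorphic Kirchberg algebras $C(C\times M)\rtimes_{\beta_t}\Gamma$ in the UCT class, arising from amenable minimal free systems on $M\times C$, as required. The main obstacle is the separation step of the third paragraph: a priori, tensoring with $K^*(M)$ together with the Tor corrections could collapse distinct Cantor invariants, and the argument hinges on selecting $\alpha_t$ with torsion-free K-groups and on exploiting the free rank of $K^0(M)$ furnished by the connectedness of $M$. A secondary technical point is verifying the K\"unneth-type formula for virtually free $\Gamma$ through the tree action underlying the Pimsner--Voiculescu computation, and checking that the resulting isomorphism matches the unit classes.
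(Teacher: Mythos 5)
Your overall architecture matches the paper's (Proposition \ref{Prop:filling} plus Proposition \ref{Prop:pi} to get Kirchberg algebras on $M\times$Cantor, a Pimsner--Voiculescu/K\"unneth computation to control K-theory, Kirchberg--Phillips to reduce to invariants, and Chapman \cite{Ch0} to get finite generation of $K^\ast(M)$), but your separation step has a genuine gap. You ``invoke \cite{Suz}'' to produce a continuum of amenable minimal free Cantor systems of $\mathbb{F}_d$ whose crossed products have $K_1=0$ and $K_0=G_t$ a rank-one subgroup of $\mathbb{Q}$. Neither \cite{Suz} nor \cite{Suz2} provides anything of this kind: what those papers (and this one, quoting them) control is only the divisibility set of the \emph{class of the unit}, $\{p\in\mathcal{P}:[1]_0\in pK_0\}=\mathcal{Q}$, together with the fact from Theorem 5.3 of \cite{Suz2} that the unit generates a $\mathbb{Z}$-summand for the seed system. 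The full $K_0$ of these crossed products is a coinvariants-type quotient of $K^0(X)\cong C(X,\mathbb{Z})$ and is not computed; it is in general far from rank one, and $K_1$ (the kernel of the PV map $\varphi$) is not shown to vanish. So the hypothesis on which your Baer-type argument rests is unavailable, and you give no construction. The paper's proof is engineered precisely to avoid needing the full group: it separates the algebras by the single invariant $\{p\in\mathcal{P}\setminus\mathcal{R}:[1]_0\in pH_0\}$, where $\mathcal{R}$ is the finite set of torsion primes of $K^1(M)$, and uses the short exact sequence of Proposition \ref{Prop:free} --- a $p$-th root of $[1_B]_0$ has trivial image under $\partial_0$ because $K_1(A)$ is torsion free and $p\notin\mathcal{R}$, hence lifts to $K_0(A)\otimes K^0(M)$, and applying $\id\otimes\tau$ for a character-induced $\tau\colon K^0(M)\to\mathbb{Z}$ produces a $p$-th root of $[1_A]_0$ in $K_0(A)$. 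Incidentally, your blanket claim that $(K_\ast,[1]_0)$ of the skew product ``agrees with that of the tensor product'' is also not known: the paper explicitly remarks that the K\"unneth-type sequence of Proposition \ref{Prop:free} is not known to split. In your intended special case the third term would vanish ($K_1(A)=0$ and $G_t$ torsion free kills the Tor term), so that step would be harmless --- but only if the input systems existed, which is exactly what is missing.

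A second, smaller gap is the virtually free case, which you defer as ``a secondary technical point'': Proposition \ref{Prop:free} is proved only for $\mathbb{F}_d$, and your suggestion of running a Pimsner-type sequence over the Bass--Serre tree is not carried out. The paper does not extend PV to virtually free groups at all; it proves everything for free groups and then passes to $\Gamma$ by taking the dynamical systems \emph{induced} from a finite-index free subgroup (cf.\ the proof of Theorem 4.7 of \cite{Suz}): the induced space is again homeomorphic to $M\times$Cantor, the crossed product is a matrix amplification of the free-group one, and the invariant is tracked through the induction. To repair your write-up you should either adopt this induction trick or genuinely establish the K\"unneth-type formula for virtually free groups; and, more importantly, you must replace the rank-one $K_0$ input by an invariant actually furnished by the cited constructions, such as the divisibility set of the unit used in the paper.
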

In the below, we regard abelian groups as $\mathbb{Z}$-modules.
We simply denote the tensor product `$\otimes_{\mathbb{Z}}$' by `$\otimes$' for short.
Recall that for two abelian groups $G, H$, the group ${\rm Tor}_1^{\mathbb{Z}}(G, H)$ is defined as follows.
First take a projective resolution of $G$.
$$\cdots \rightarrow P_2\rightarrow P_1 \rightarrow P_0\rightarrow G \rightarrow 0.$$
Then by tensoring $H$ with the above resolution,
we obtain a complex
$$\cdots \rightarrow P_2\otimes H \rightarrow P_1\otimes H \rightarrow P_0 \otimes H \rightarrow 0.$$
The group  ${\rm Tor}_1^{\mathbb{Z}}(G, H)$ is then defined as the first homology of the above complex.
Note that the definition does not depend on the choice of the projective resolution.
We remark that when we have a projective resolution of length one
$$0\rightarrow P_1 \rightarrow P_0\rightarrow G \rightarrow 0,$$
then ${\rm Tor}_1^{\mathbb{Z}}(G, H)$ is computed as the kernel
of the homomorphism $P_1\otimes H\rightarrow P_0\otimes H$.
See \cite{Br} for the detail.
\begin{Prop}\label{Prop:free}
Let $\alpha\colon \mathbb{F}_d \curvearrowright X$ be an amenable minimal topologically free dynamical system of the free group $\mathbb{F}_d$
on the Cantor set $X$.
Let $\mathcal{G}\curvearrowright Y$ be a minimal action of a path-connected group $\mathcal{G}$ on a compact metrizable space $Y$.
Let $\beta\in \overline{\mathcal{S}}_{\mathcal{G}}(\alpha)$.
Let $A$ and $B$ denote the crossed product of $\alpha$ and $\beta$ respectively.
Then for $i=0, 1$, we have the following short exact sequence.
$$0\rightarrow K_0(A)\otimes K^i(Y) \rightarrow K_i(B)\rightarrow
(K_1(A)\otimes K^{1-i}(Y))\oplus {\rm Tor}_1^\mathbb{Z}(K_0(A), K^{1-i}(Y))\rightarrow 0.$$
Moreover, the first map maps $[1_A]_0\otimes [1_Y]_0$ to $[1_B]_0$ when $i=0$.
\end{Prop}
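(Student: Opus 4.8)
The plan is to compute $K_*(A)$ and $K_*(B)$ from the Pimsner--Voiculescu six-term exact sequence for crossed products by the free group $\mathbb{F}_d$ \cite{PV} and to compare the two. Writing $s_1,\dots,s_d$ for the free generators, for a system $\gamma\colon\mathbb{F}_d\curvearrowright W$ this sequence has boundary-type maps
$$\mu_\gamma^i\colon K_i(C(W))^{\oplus d}\to K_i(C(W)),\qquad (x_j)_j\mapsto\textstyle\sum_{j}(x_j-(\gamma_{s_j})_*x_j),$$
and yields short exact sequences $0\to\coker(\mu_\gamma^i)\to K_i(C(W)\rtimes\mathbb{F}_d)\to\ker(\mu_\gamma^{1-i})\to 0$. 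First I would apply this to $A$: since $X$ is the Cantor set, $K^0(X)=C(X,\mathbb{Z})$ is free abelian and $K^1(X)=0$, so the sequence collapses to $0\to K_1(A)\to C(X,\mathbb{Z})^{\oplus d}\xrightarrow{\ \mu_\alpha\ }C(X,\mathbb{Z})\to K_0(A)\to 0$, where $\mu_\alpha:=\mu_\alpha^0$. Thus $K_1(A)=\ker\mu_\alpha$ is a subgroup of a free abelian group, hence free, and $0\to\operatorname{im}\mu_\alpha\to C(X,\mathbb{Z})\to K_0(A)\to 0$ is a free resolution of $K_0(A)$ of length one.

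The heart of the matter is the identification of the maps $\mu_\beta^i$ for $B$. Since $X$ is totally disconnected, $K$-theory's compatibility with inductive limits gives $K^i(X\times Y)=C(X,\mathbb{Z})\otimes K^i(Y)$, and I claim that
$$(\beta_{s})_*=(\alpha_{s})_*\otimes\id\quad\text{on } K^i(X\times Y)\ \ (i=0,1)\ \text{for all }s\in\mathbb{F}_d.\qquad(\star)$$
Granting $(\star)$, one gets $\mu_\beta^i=\mu_\alpha\otimes\id_{K^i(Y)}$, and the proposition then follows by elementary homological algebra. Tensoring $C(X,\mathbb{Z})^{\oplus d}\xrightarrow{\mu_\alpha}C(X,\mathbb{Z})\to K_0(A)\to0$ with $K^i(Y)$ gives $\coker(\mu_\beta^i)=K_0(A)\otimes K^i(Y)$; and since $\operatorname{im}\mu_\alpha$ is free, the free group $C(X,\mathbb{Z})^{\oplus d}$ splits as $\ker\mu_\alpha\oplus R'$ with $\mu_\alpha$ carrying $R'$ isomorphically onto $R:=\operatorname{im}\mu_\alpha$, so tensoring with $K^{1-i}(Y)$ yields
$$\ker(\mu_\beta^{1-i})=(K_1(A)\otimes K^{1-i}(Y))\oplus\ker\big(R\otimes K^{1-i}(Y)\to C(X,\mathbb{Z})\otimes K^{1-i}(Y)\big),$$
the second summand being $\operatorname{Tor}_1^{\mathbb{Z}}(K_0(A),K^{1-i}(Y))$ by the length-one resolution above. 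Feeding these into the Pimsner--Voiculescu short exact sequence produces the asserted one. The last assertion is then a matter of tracing $[1_{C(X\times Y)}]=1_X\otimes[1_Y]_0$ through the identification $\coker(\mu_\beta^0)=K_0(A)\otimes K^0(Y)$ and the map $K_0(C(X\times Y))\to K_0(B)$, which carries it to $[1_B]_0$.

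It remains to prove $(\star)$, which is where the hypotheses on $\mathcal{G}$ enter and which I expect to be the main obstacle. First I would record that every $g$ in the image of $\mathcal{G}$ in $\mathcal{H}(Y)$ acts trivially on $K^*(Y)$: path-connectedness of $\mathcal{G}$ makes $g$ homotopic to $\id_Y$, and homotopic homeomorphisms induce the same map on $K$-theory. This extends to $\overline{\mathcal{G}}$ by a per-class approximation: if $g_n\to g$ in $\mathcal{H}(Y)$ then $g_n^*\to g^*$ in the point-norm topology, so for a fixed class $[v]$ the elements $g_n^*v$ and $g^*v$ are eventually norm-close and hence represent the same class; thus $g^*=\id$ on $K^*(Y)$ for every $g\in\overline{\mathcal{G}}$. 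Next, writing $c$ for the cocycle of $\beta$ (which takes values in $\overline{\mathcal{G}}$), one has $\beta_s=\tau_s\circ\bar\alpha_s$ with $\tau_s(x,y)=(x,c(s,\alpha_s^{-1}x)y)$ a homeomorphism fixing the $X$-coordinate and with fibre maps in $\overline{\mathcal{G}}$; since $(\bar\alpha_s)_*=(\alpha_s)_*\otimes\id$, it suffices to show $(\tau_s)_*=\id$.

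For this last step I would fix $w\in K^*(X\times Y)$ and use total disconnectedness of $X$ to represent it on a finite clopen partition $X=\bigsqcup_U U$ by elements $v_U\in M_k(C(Y))$, so that $w|_{U\times Y}=1_U\otimes[v_U]$; then I would refine the partition until $c(s,\alpha_s^{-1}\cdot)$ is so nearly constant on each $U$ that the function $(x,y)\mapsto v_U(c(s,\alpha_s^{-1}x)y)$ is uniformly norm-close to $(x,y)\mapsto v_U(c(s,\alpha_s^{-1}x_U)y)$ for a chosen $x_U\in U$. On $U\times Y$ the former represents $(\tau_s)_*(w|_{U\times Y})$, while the latter equals $1_U\otimes(\gamma_{x_U}^*[v_U])=1_U\otimes[v_U]$ because $\gamma_{x_U}:=c(s,\alpha_s^{-1}x_U)\in\overline{\mathcal{G}}$ acts trivially on $K^*(Y)$; summing over $U$ gives $(\tau_s)_*w=w$. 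The delicate point is that $\mathcal{G}$ is assumed only path-connected, not locally path-connected, so a global null-homotopy of the cocycle is unavailable; this partition-refinement argument is designed precisely to sidestep that and to use only fibrewise triviality on $K^*(Y)$ together with the totally disconnected structure of $X$.
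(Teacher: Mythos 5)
Your proposal is correct and follows essentially the same route as the paper's proof: the Pimsner--Voiculescu sequence for $\mathbb{F}_d$, the identification $K^i(X\times Y)\cong K^0(X)\otimes K^i(Y)$ together with the key identity $(\beta_s)_{\ast,i}=(\alpha_s)_{\ast,0}\otimes\id_{K^i(Y)}$, and then exactly the homological algebra the paper carries out (cokernel via right-exactness, kernel split off using freeness of ${\rm im}\,\varphi$, and ${\rm Tor}_1^{\mathbb{Z}}$ from the length-one free resolution $0\rightarrow {\rm im}\,\varphi\rightarrow K^0(X)\rightarrow K_0(A)\rightarrow 0$). The only divergence is in how the key identity is verified for a limit system: the paper first checks it on $\mathcal{S}_{\mathcal{G}}(\alpha)$, where conjugating homeomorphisms $H\in\mathcal{G}_s$ act fibrewise by elements of the path-connected group and hence trivially on $K^i(Y)$, and then extends to $\overline{\mathcal{S}}_{\mathcal{G}}(\alpha)$ by continuity of $K$-theory, whereas you fix $\beta$ itself, use that its cocycle takes values in $\overline{\mathcal{G}}$, and prove $(\tau_s)_\ast=\id$ by a clopen-partition refinement exploiting total disconnectedness of $X$ and discreteness of $K^\ast(Y)$ --- a correct and more explicitly written-out rendering of the same two ingredients (homotopy-triviality of fibre maps plus norm approximation), not a genuinely different method.
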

\begin{proof}
Since $C(X)$ is an AF-algebra,
we have a canonical isomorphism
$$K^i(X \times Y)\rightarrow C(X, K^i(Y)) (\cong K^0(X)\otimes K^i(Y))$$ for $i=0, 1$.
Here $C(X, K^i(Y))$ denotes the group of continuous maps from
$X$ into $K^i(Y)$ and $K^i(Y)$ is regarded as a discrete group.
For $i=0$, the isomorphism is given by mapping
the element $[p]_0$ where $p$ is a projection in $\mathbb{K}\otimes C(X)\otimes C(Y)$
to the map $x\in X\mapsto [p(x, \cdot)]_0\in K^0(Y)$ and similarly for the case $i=1$.

From this isomorphism and the fact that $\mathcal{G}$ is path-connected,
for any $\gamma \in \mathcal{S}_{\mathcal{G}}(\alpha)$ and $g\in \mathbb{F}_d$,
we have $(\gamma_g)_{\ast, i}=(\alpha_g)_{\ast, 0} \otimes \id_{K^i(Y)}$ for $i=0, 1$.
Here we identify $K^i(X\times Y)$ with $K^0(X)\otimes K^i(Y)$
under the above isomorphism.
By continuity of the K-theory, the above equality
holds for all $\gamma\in \overline{\mathcal{S}}_{\mathcal{G}}(\alpha)$.
Now let $S$ be a free basis of $\mathbb{F}_d$.
Then by the Pimsner--Voiculescu six term exact sequence \cite{PV},
we have the following short exact sequence.
$$0\rightarrow \coker(\varphi\otimes \id_{K^i(Y)})\rightarrow K_i(B)\rightarrow \ker(\varphi\otimes \id_{K^{1-i}(Y)})\rightarrow 0.$$
Here $\varphi$ denotes the homomorphism
$$\varphi\colon K^0(X)^{\oplus S}\rightarrow K^0(X)$$
which maps $(f_s)_{s\in S}$ to $\sum_{s\in S}(f_s- (\alpha_s)_{\ast, 0}(f_s))$. 
Since $K^0(X)$ is a free abelian group,
the exact sequence $$0\rightarrow K_1(A)\rightarrow K^0(X)^{\oplus S}\rightarrow K^0(X)\rightarrow K_0(A)\rightarrow 0$$
obtained by the Pimsner--Voiculescu six-term exact sequence
is a free resolution of $K_0(A)$.
This also gives the free resolution
$$0\rightarrow {\rm im}(\varphi)\rightarrow K^0(X)\rightarrow K_0(A)\rightarrow 0$$
of $K_0(A)$. Here the first map is given by the inclusion map, say $\iota$.

Let $\psi\colon K^0(X)^{\oplus S}\rightarrow {\rm im}(\varphi)$
be the surjective homomorphism obtained by restricting the range of $\varphi$.
By tensoring $K^i(Y)$ with the second free resolution, we obtain
the following exact sequence.
$$0\rightarrow {\rm Tor}_1^\mathbb{Z}(K_0(A), K^i(Y))\rightarrow {\rm im}(\varphi) \otimes K^i(Y) \rightarrow K^0(X)\otimes K^i(Y)\rightarrow K_0(A)\otimes K^i(Y)\rightarrow 0.$$
This shows that
$$\ker(\iota\otimes \id_{K^i(Y)})\cong {\rm Tor}_1^\mathbb{Z}(K_0(A), K^i(Y)).$$
Since the second map surjects onto ${\rm im}(\varphi\otimes \id_{K^i(Y)})$,
we also obtain the isomorphism
$$\coker(\varphi\otimes \id_{K^i(Y)})\cong K_0(A)\otimes K^i(Y).$$
Since $\varphi=\iota\circ \psi$ and $\psi$ is surjective, we have the following exact sequence.
\begin{equation}\label{eq:1}
0\rightarrow \ker(\psi \otimes \id_{K^i(Y)}) \rightarrow \ker(\varphi\otimes \id_{K^i(Y)}) \rightarrow \ker(\iota\otimes \id_{K^i(Y)}) \rightarrow 0.
\end{equation}
Here the first map is the canonical inclusion and the second map
is the restriction of $\psi\otimes \id_{K^i(Y)}$.
Since ${\rm im}(\varphi)$ is free abelian,
there is a direct complement $K$ of $\ker(\varphi)$ in $K^0(X)^{\oplus S}$.
Note that the restriction of $\psi$ on $K$ is an isomorphism.
Hence we have the isomorphism
$$\ker(\psi\otimes \id_{K^i(Y)})=\ker(\psi)\otimes K^i(Y)\cong K_1(A)\otimes K^i(Y).$$
Again by the freeness of ${\rm im}(\varphi)$, we have a right inverse $\sigma$ of $\psi$.
Then the homomorphism $\sigma\otimes \id _{K^i(Y)}$ gives a splitting of the short exact sequence (\ref{eq:1}).
Combining these observations, we obtain the isomorphism
$$\ker(\varphi\otimes \id_{K^i(Y)})\cong (K_1(A)\otimes K^i(Y))\oplus {\rm Tor}_1^\mathbb{Z}(K_0(A), K^i(Y)).$$
Now the first exact sequence completes the proof.
\end{proof}
\begin{Rem}
Certainly, when $K^\ast(Y)$ has a good property, the short exact sequence in Proposition \ref{Prop:free} is spilitting.
For example, it holds true when $K^{1-i}(Y)$ is projective or one of $K_0(A)$ or $K^i(Y)$ is injective.
(Recall that $K_1(A)$ is always free abelian
and that the tensor product of an injective $\mathbb{Z}$-module with an arbitrary $\mathbb{Z}$-module is
again injective by Corollary 4.2 of Ch.III of \cite{Br}.)
However, we do not know whether it is splitting in general.
Recall that a splitting of the K$\ddot{{\rm u}}$nneth tensor product theorem is obtained by
replacing considered \Cs -algebras
by easier ones by using suitable elements of the KK-groups (see Remark 7.11 of \cite{RSc}).
However, in our setting, this argument does not work.
Such replacement does not respect
the relation among $C(X), C(Y), A, B,$ and $\mathbb{F}_d$.
\end{Rem}
\begin{proof}[Proof of Theorem \ref{Thm:man}]
We first prove the claim for free groups.
Theorem 5.3 of \cite{Suz2} shows that for any finite $d$, there is an amenable minimal topologically free dynamical system $\gamma$
of $\mathbb{F}_d$ on the Cantor set whose crossed product $A$ satisfies the following condition.
The unit $[1]_0\in K_0(A)$ generates a direct summand of $K_0(A)$ isomorphic to $\mathbb{Z}$.
Note that this property passes to unital \Cs -subalgebras of $A$.
Moreover, since $\gamma$ is found as a factor of the ideal boundary action,
its restriction to any finite index subgroup of $\mathbb{F}_d$ is minimal.
It is also not hard to show that
the restriction of $\gamma$ to any finite index subgroup of $\mathbb{F}_d$ is purely infinite
(cf.\ the proof of Lemma 4.8 of \cite{Suz2}).
Applying the argument in the proof of Theorem 4.22 in \cite{Suz}
using $\gamma$ instead of the action used there, we obtain the following consequence.
(By finite generatedness, in this case the proof becomes easier than the one there.)
For any non-empty set $\mathcal{Q}$ of prime numbers,
there is an amenable minimal free purely infinite dynamical system $\alpha_\mathcal{Q}$ of $\mathbb{F}_d$ on the Cantor set whose $K_0$-group $G$ satisfies the following condition.
$$\{p\in \mathcal{P}: [1]_0\in pG\}= \mathcal{Q}.$$
Here $\mathcal{P}$ denotes the set of all prime numbers.
The similar statement for $\mathbb{F}_\infty$ is shown in the proof of Theorem 4.22 of \cite{Suz}.
We also denote by $\alpha_\mathcal{Q}$ a dynamical system of $\mathbb{F}_\infty$ satisfying the above conditions.

Now let $M$ be as in the statement.
Put $$\mathcal{R}:=\{p\in \mathcal{P}: K^1(M){\rm \ contains\ an\ element \ of\ order\ }p\}.$$
Then by \cite{Ch0}, $\mathcal{R}$ is finite.
(Indeed, in either case, $M\times [0, 1]^\mathbb{N}$ is a compact Hilbert cube manifold.
Now the main theorem of \cite{Ch0} shows that
$K^1(M)$ is in fact finitely generated.)

Let $\mathcal{G}$ denote the path-connected component of $\mathcal{H}(M)$ containing the identity.
For each non-empty subset $\mathcal{Q}$ of $\mathcal{P}\setminus \mathcal{R}$,
we apply Proposition \ref{Prop:filling} to $\alpha_\mathcal{Q}$ to choose $\beta$ from $\overline{\mathcal{S}}_{\mathcal{G}}(\alpha_\mathcal{Q})$ whose crossed product is a Kirchberg algebra.
For $i=0, 1$, denote by $G_i$ and $H_i$ the $K_i$-group of the crossed products of $\alpha_\mathcal{Q}$ and $\beta$ respectively.
We claim that
$$\tilde{\mathcal{Q}}:=\{p\in \mathcal{P}\setminus \mathcal{R}: [1]_0\in pH_0\}= \mathcal{Q}.$$
Since the cardinal of the power set of $\mathcal{P}\setminus \mathcal{R}$ is continuum,
this ends the proof.
The inclusion $\mathcal{Q}\subset \tilde{\mathcal{Q}}$ is obvious.
To see the converse, let $p\in \tilde{\mathcal{Q}}$ and
take $h\in H_0$ with $ph=[1]_0$.
Denote by $\partial_i$ the third map of the short exact sequence in Proposition \ref{Prop:free}.
Then since $\partial_0([1]_0)=0$,
we have
$p\partial_0(h)=0$.
On the other hand, by the definition of $\mathcal{R}$ and the fact that $G_1$ is torsion free,
there is no element of order $p$ in the third term of the short exact sequence.
Thus $p\partial_0(h)=0$ implies $\partial_0(h)=0$.
Hence there is an element $y$ in the first term of the short exact sequence
with $\sigma_0(y)=h$.
Here $\sigma_i$ denotes the second map in the short exact sequence.
Then from the injectivity of $\sigma_0$
and the equality $ph=[1]_0$,
we must have $py=[1]_0\otimes [1_M]_0$.
Now let $\tau\colon K^0(M) \rightarrow \mathbb{Z}$
be the homomorphism induced from a character on $C(M)$.
Put $w:=(\id \otimes \tau)(y)\in G_0$.
(We identify $G_0$ with $G_0\otimes \mathbb{Z}$ in the obvious way.)
Then we have $pw=(\id\otimes\tau)([1]_0\otimes [1_M]_0)=[1]_0$.
Thus we get $p\in \mathcal{Q}$ as desired.

The proof for general case is done by taking the induced dynamical systems of the actions obtained in above.
For the detail, see the proof of Theorem 4.7 in \cite{Suz} for instance.
\end{proof}
\subsection*{Acknowledgement}
The author was supported by Research Fellow
of the JSPS (No.25-7810) and the Program of Leading Graduate Schools, MEXT, Japan.

\end{document}